\newtheorem{theorem}{Theorem}[section]
\theoremstyle{plain}
\newtheorem{lemma}[theorem]{Lemma}
\newtheorem{proposition}[theorem]{Proposition}
\newtheorem{remark}{Remark}
\numberwithin{equation}{section}
\def\r{\mathbb{R}}
\def\N{\mathbb{N}}
\def\eps{\varepsilon}
\def\tilde{\widetilde}
\def\cH{\mathcal{H}}
\def\cN{\mathcal{N}}
\def\cL{\mbox{\scalebox{1.1}{$\mathcal{L}$}}}
\def\setminus{\smallsetminus}
\def\implies{\Rightarrow}
\newcommand{\dint}{\displaystyle\int}
\def\eps{\varepsilon}
\def\tilde{\widetilde}
\def\cH{\mathcal{H}}
\def\N{\mathbb{N}}
\def\implies{\Rightarrow}
\newcommand{\R}{\mathbb{R}}
\def\be{\begin{equation}}
	\def\ee{\end{equation}}
\def\bes{\begin{equation*}}
	\def\ees{\end{equation*}}
\def\bsp{\begin{split}}
	\def\esp{\end{split}}
\def\ba{\begin{array}}
	\def\ea{\end{array}}
\def\benu{\begin{enumerate}}
	\def\eenu{\end{enumerate}}
\def\bt{\begin{theorem}}
	\def\et{\end{theorem}}
\def\bp{\begin{proposition}}
	\def\ep{\end{proposition}}
\def\bl{\begin{lemma}}
	\def\el{\end{lemma}}
\def\br{\begin{remark}}
	\def\er{\end{remark}}
\def\bd{\begin{definition}}
	\def\ed{\end{definition}}
\def\.{\cdot}
\def\R{\mathbb{R}}
\def\~{\tilde}
\def\8{\infty}
\def\({\left(}\def\){\right)}
\begin{document}

\title[A New Approach to Inspect Weakly Coupled Logistic  Systems and their Asymptotic Behavior]{A New Approach to Inspect Weakly Coupled Logistic  Systems and their Asymptotic Behavior}
%Existence of Solutions to a Weakly Coupled Logistic  System and their Asymptotic Behavior]{Existence of Solutions to a Weakly Coupled Logistic  System and their Asymptotic Behavior}
\author{Haoyu Li}
\address{Departamento de Matem\'{a}tica, UFSCar, 13565-905, Sao Carlos, Brazil.}
\email{hyli1994@hotmail.com}
\author{Liliane Maia}
\address{Departamento de Matem\'{a}tica, UNB, 70910-900, Brasilia, Brazil.}
\email{lilimaia@unb.br}
\author{Mayra Soares}
\address{Departamento de Matem\'{a}tica, UNB, 70910-900, Brasilia, Brazil.}
\email{mayra.soares@unb.br}

\thanks{The second author was supported by CAPES, FAPDF and CNPq/BRAZIL grant 309866/2020-0}
\date{\today} 
\maketitle %to position the bar under the names
\vspace{-0.3cm}
\begin{changemargin}{-1.1cm}{-1.1cm}  %margin of the abstract
\quad \quad \quad \ \rule{15.3cm}{0.001cm} %bar over the abstract
\begin{abstract}
%	\vspace{0.5cm}
{\small 	We consider the weakly coupled elliptic system of logistic type,
\begin{equation}\label{LS}
	\begin{cases}
		-\Delta u &=\lambda_1 u- |u|^{p-2}u+ \beta |u|^{\frac{p}{2}-2}u |v|{^{\frac{p}{2}-1}}v\mbox{ in }\Omega,\\
		-\Delta v & =\lambda_2 v- |v|^{p-2}v+\beta |u|^{\frac{p}{2}-1}u|v|^{\frac{p}{2}-2}v \mbox{ in }\Omega,\\
	\ \	u,v &\in H_0^1(\Omega),
	\end{cases} \tag{$LS$}
\end{equation}
	where $\Omega\subset\mathbb{R}^N$ is a bounded domain with $N\geq 2$, $2< p < 2^*$, and  $\lambda_1(\Omega)< \lambda_1 \leq \lambda_2$. We say the system is competitive if $\beta<0$ and cooperative if $\beta>0$, for $\beta \in \mathbb{R}$.
	
We prove the existence and multiplicity of solutions to the problem \eqref{LS} in alternative variational frameworks, depending on the range of the parameter $\beta.$  We do not rely on bifurcation  or degree theory, which have been used in the literature for logistic-type problems. Instead, the novelty is to obtain min-max type solutions by exploiting the different geometry of the functional associated with the logistic problem. In case $N\geq 2$ and suitable values of $p$, we extend the existence results, for all $\beta$ in the whole line, and possibly for the classical case $N=3$ and $p=4$. Furthermore, we analyze the asymptotic behavior of such solutions as $\beta \to 0$ or $\beta \to \pm \infty.$}
\bigskip
\newline
\textsc{Key words: Logistic System, Ground State Solution, Linking structure, seminodal Solution.}{\small}
\bigskip
\newline
\textsc{MSC 2020: } {\small 35J15 35J20, 35J60, 62J12.}
\newline	
\rule{15.3cm}{0.001cm}%bar under the abstract
\end{abstract}
\end{changemargin}
\maketitle %without this, the title and abstract disapperar.
\vspace{0.5cm}
\section{Introduction}
\label{sec:introduction}
We study the weakly coupled elliptic system of the logistic type \eqref{LS}, whose motivation comes from the particular model, 
\begin{equation}\label{PartSystem}
	\left\{
	\begin{array}{lr}
		-\Delta u =\lambda_1 u- u^{3}+ \beta u v^{2}\mbox{ in }\Omega,\\
		-\Delta v =\lambda_2 v- v^{3}+\beta u^{2}v \mbox{ in }\Omega,\\
		u,v\in H_0^1(\Omega),
	\end{array}
	\right.
\end{equation}
where $\Omega\subset\mathbb{R}^3$ is a bounded domain. Such a system appears in a vast context of dynamic of populations, and hence has been extensively studied in the past decades as in \cite{CC,DD,LG,TZQW} and there references. System \eqref{LS} characterizes the equilibrium state of two interacting species within the environment $\Omega.$ The interaction may be either competitive if $\beta<0$ or cooperative if $\beta>0$, for $\beta \in \mathbb{R}$.

Our goal is to prove the existence of solutions to the problem \eqref{LS} in different variational frameworks depending on the information we have about the parameter $\beta.$ More specifically, we obtain at least one solution for $\beta$ in the whole line and for suitable values of $p$, as $N\geq 2$. Moreover, in the classical case $N=3$ and $p=4$, we construct a solution for all $\beta > 0$, for small $|\beta|$, and, beyond a certain threshold, for $\beta < 0$, possibly covering the entire real line. In all cases, we do not rely on non-variational methods such as sub-supersolution techniques, bifurcation theory, or degree theory, which are commonly used in the literature for logistic-type problems. Instead, our arguments are innovative and unconventional, as the geometry of the functional associated with the logistic problem prevents the direct application of standard variational methods to obtain min-max type solutions.
 Furthermore, we analyze the asymptotic behavior of such solutions as $\beta \to 0$ or $\beta \to \pm \infty.$

Many authors have been working on logistic problems in the last years.
 They have applied a variety of techniques involving topological methods to obtain positive solutions.
 	 Only recently the  variational approach has begun to be applied. This has allowed to explore more information about the existence of solutions for the ranges of coupling parameters for which sign-changing solutions are detected and positive do not exist.

Some classical and pioneer works found in the literature have motivated our study. In \cite{CC}, by the Crandall-Rabinowitz bifurcation, Cantrell and Cosner established conditions on the coefficients to obtain positive solutions to the non-variational elliptic system \begin{equation}\begin{cases}-\Delta u=au-u^2-cuv,\\ -\Delta v=dv-v^2-euv,\end{cases} \tag{NV}\end{equation}
		in a bounded domain with the Dirichlet boundary condition.
Likewise, in \cite{DD}, Dancer and Du 
	 obtained positive solutions to the same problem,  when the interaction parameters $c$ and $e$ are large, by applying a degree theory argument.  We also cite \cite[Chapter 10]{LG}, by López-Gómez, for a extensive survey on general non-variational logistic systems. 
	
More recently, laying hand of local and global bifurcations, in \cite{TZQW}, Tian and Wang identified some intervals of $\beta$ in which there exist finitely many bifurcation points  with respect to a trivial solution branch to the system
\begin{equation}\label{VS}
	\begin{cases}-\Delta u=au-\mu_1u^3+\beta cuv^2, \ u>0\\ -\Delta v=av-\mu_2v^3+\beta u^2v, \ v>0 \end{cases}\end{equation} 
		in a bounded domain, with Dirichlet conditions, where $a > \lambda_1(-\Delta,\Omega)$. Working with a similar system, we extend and complement their results, varying a larger range on the parameter $\beta$ for the logistic case. Due to our approach, in addition to positive solutions, we are also able to find seminodal solutions.
	
 Synchronized solutions for the system \eqref{VS}, were constructed by Cheng and Zhang in	\cite{CZ2021}. Considering different parameters $\beta_1$ and $\beta_2$, instead of $\beta$, the authors presented a detailed study of existence and uniqueness, using comparison arguments. In the cases where our problem coincides with theirs, our results guarantee multiplicity of solutions. Furthermore, for  certain ranges of $\beta$, we complement with solutions not found previously.

We consider $\cH := H_0^1(\Omega)\times H_0^1(\Omega)$, endowed with the norm given by \[\|(u,v)\|^2 := \|\nabla u\|_2^2 + \|\nabla v\|_2^2, \mbox{ for every } (u,v)\in \cH.\] Then, the $C^1$ functional $J_\beta: \cH \to \r$, associated with problem \ref{system+} is given by 
\begin{equation}\label{functionalbeta}
	J_\beta(u,v)=\frac{1}{2}\dint_{\Omega}|\nabla u|^2-\lambda_1 u^2+|\nabla v|^2-\lambda_2 v^2+\frac{1}{p}\dint_{\Omega} |u|^p+|v|^p-\frac{2\beta}{p}\dint_{\Omega} |u|^{\frac{p}{2}}|v|^{\frac{p}{2}}.
\end{equation}
Especially, if $\beta = 0$,
\begin{equation}\label{functionalzero}
	J_0(u,v)=\frac{1}{2}\dint_{\Omega}|\nabla u|^2-\lambda_1 u^2+|\nabla v|^2-\lambda_2 v^2+\frac{1}{p}\dint_{\Omega} |u|^p+|v|^p.
\end{equation}
Its derivative is given by 
\begin{align}\label{J'}
	J'_\beta(u,v)(\varphi,\psi)=&\dint_{\Omega}(\nabla u\cdot \nabla \varphi + \nabla v\cdot \nabla \psi -\lambda_1 u\varphi -\lambda_2 v\psi)\nonumber\\
	&+\dint_{\Omega} (|u|^{p-2}u\varphi+|v|^{p-2}v\psi-2\beta uv\varphi),
\end{align}
where $(\varphi, \psi) \in \cH.$ It is known that the critical points of $J_\beta$ are the weak solutions to problem \eqref{LS}. A critical point $(u,v)$ is a vectorial solution if $ u \neq 0$ and $v \neq 0$, and it is called a semi-trivial solution if either $u = 0$ and $v\neq 0$, or $u\neq 0$ and $v = 0$. Moreover, we say a vectorial solution is semi-nodal if one of the components changes sign. In terms of practical biological understanding, the extinction of some specie occurs when the corresponding component is zero. Therefore, semi-trivial solutions mean that one specie dominates the other. 

We say that a critical point $(u,v)$ is of positive energy when $J_\beta(u,v)>0$. Analogously, for negative energy.  
We introduce the Nehari set
\begin{align}\label{nehari}
	\mathcal{N}_\beta=\left\{(u,v)\in \cH \setminus \{(0,0)\} :\begin{array}{ll}& \dint_{\Omega}|\nabla u|^2-\lambda_1 u^2+|\nabla v|^2-\lambda_2 v^2 \\&+\dint_{\Omega} |u|^p+|v|^p-2\beta\dint_{\Omega} |u|^{\frac{p}{2}}|v|^{\frac{p}{2}}=0\end{array}\right\}.
\end{align}
Moreover, we denote Nehari minimum energy level by
\begin{equation}\label{infimum}
	m_\beta:=\inf_{(u,v)\in\mathcal{N}_\beta}J_\beta(u,v).
\end{equation}

Our main results are stated as follows. 

\begin{theorem}\label{t:beta(0,1)}
	Assume  $\lambda_2\geq \lambda_1>\lambda_1(\Omega)$.	There exists a constant $\delta_0$ such that for any $\beta\in(-\delta_0,1)$, the ground state $m_\beta$ is attained by a non-negative vectorial solution $(u_\beta,v_\beta)$ to problem \eqref{LS}. Moreover, $m_\beta\to-\infty$ as $\beta\to 1^-$. 
\end{theorem}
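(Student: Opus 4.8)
The plan is to realize $m_\beta$ as a constrained minimum and to exploit the \emph{inverted} geometry of $J_\beta$: because the pure-power term $\frac1p\int_\Omega|u|^p+|v|^p$ enters with a positive sign, the fibering map along a ray behaves oppositely to the classical superlinear case. First I would record the algebraic identity, valid for $0\le\beta<1$,
\[
\int_\Omega |u|^p+|v|^p-2\beta\int_\Omega|u|^{p/2}|v|^{p/2}=(1-\beta)\int_\Omega\big(|u|^p+|v|^p\big)+\beta\int_\Omega\big(|u|^{p/2}-|v|^{p/2}\big)^2,
\]
which (with the trivial case $\beta<0$) shows that $B(u,v):=\int_\Omega|u|^p+|v|^p-2\beta\int_\Omega|u|^{p/2}|v|^{p/2}$ is strictly positive for every $(u,v)\neq(0,0)$ whenever $\beta<1$. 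Writing $A(u,v):=\int_\Omega|\nabla u|^2-\lambda_1u^2+|\nabla v|^2-\lambda_2v^2$, the fiber map $t\mapsto J_\beta(tu,tv)=\tfrac{t^2}2A+\tfrac{t^p}pB$ then has a unique positive critical point $t_*$ \emph{precisely when} $A(u,v)<0$ (possible since $\lambda_1>\lambda_1(\Omega)$), and this critical point is a minimum of the fiber, with closed form $J_\beta(t_*u,t_*v)=\frac{2-p}{2p}(-A)^{p/(p-2)}B^{-2/(p-2)}<0$; hence $m_\beta=\frac{2-p}{2p}\sup\{(-A)^{p/(p-2)}B^{-2/(p-2)}:A(u,v)<0\}$.

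Next I would prove $m_\beta$ is finite and attained. The bound $-A\le\lambda_2\int_\Omega(u^2+v^2)\le C\big(\int_\Omega|u|^p+|v|^p\big)^{2/p}$ (H\"older on the bounded domain) combined with $B\ge(1-\beta)\int_\Omega(|u|^p+|v|^p)$ makes the scale-invariant ratio bounded above by $C^{p/(p-2)}(1-\beta)^{-2/(p-2)}$, so $m_\beta>-\infty$. For a minimizing sequence $(u_n,v_n)\in\mathcal N_\beta$ one has $J_\beta(u_n,v_n)=\tfrac{p-2}{2p}A_n\to m_\beta$, so $A_n=-B_n$ is bounded, and the inequalities above bound $(u_n,v_n)$ in $H_0^1(\Omega)\times H_0^1(\Omega)$. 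Since $p<2^*$ the embedding into $L^p$ is compact, so up to a subsequence $(u_n,v_n)\rightharpoonup(u,v)$ with $B_n\to B(u,v)$, while lower semicontinuity gives $A(u,v)\le\liminf A_n=:A_\infty<0$ (so $(u,v)\neq0$). Projecting $(u,v)$ onto $\mathcal N_\beta$ and using $B(u,v)=-A_\infty$ with the monotonicity of $x\mapsto x^{p/(p-2)}$, the projected energy is $\le m_\beta$, which forces equality, $A(u,v)=A_\infty$, strong $H_0^1$ convergence, and $(u,v)\in\mathcal N_\beta$ with $J_\beta(u,v)=m_\beta$. A Lagrange-multiplier argument makes $(u,v)$ a genuine critical point (the multiplier vanishes because the constraint derivative equals $(p-2)B\neq0$ on $\mathcal N_\beta$), and replacing $(u,v)$ by $(|u|,|v|)$ — which leaves both $A$ and $B$ unchanged — produces a non-negative minimizer, hence a non-negative solution.

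The main obstacle is to guarantee the minimizer is \emph{vectorial}, and this is exactly where the threshold $\delta_0$ enters. Denoting by $c_{\lambda_i}$ the scalar logistic ground-state levels, with $c_{\lambda_2}\le c_{\lambda_1}<0$, a semi-trivial minimizer would force $m_\beta\ge\min(c_{\lambda_1},c_{\lambda_2})=c_{\lambda_2}$, so it suffices to prove the strict inequality $m_\beta<c_{\lambda_2}$. Testing with the pair $(w_1,w_2)$ of scalar ground states gives a point of $\mathcal N_0$ of energy $c_{\lambda_1}+c_{\lambda_2}<c_{\lambda_2}$; since $B$ decreases in $\beta$, monotonicity yields $m_\beta\le m_0\le c_{\lambda_1}+c_{\lambda_2}<c_{\lambda_2}$ for all $\beta\in[0,1)$, while for negative $\beta$ the energy of the $\mathcal N_\beta$-projection of $(w_1,w_2)$ is continuous in $\beta$ and equals $c_{\lambda_1}+c_{\lambda_2}$ at $\beta=0$, hence stays below $c_{\lambda_2}$ on some interval $(-\delta_0,0)$, which defines $\delta_0$.

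Finally, for $m_\beta\to-\infty$ as $\beta\to1^-$ I would test with $u=v=\phi_1$, the first Dirichlet eigenfunction: then $B=2(1-\beta)\|\phi_1\|_p^p\to0$ while $-A=(\lambda_1+\lambda_2-2\lambda_1(\Omega))\|\phi_1\|_2^2>0$ stays fixed, so the ratio $(-A)^{p/(p-2)}B^{-2/(p-2)}$, and hence its supremum, diverges, giving $m_\beta\to-\infty$. The delicate points are the strictness of $m_\beta<c_{\lambda_2}$ near $\beta=0$ (and the consequent existence of $\delta_0$) and the verification that $\mathcal N_\beta$ is a natural constraint; everything else is routine once the inverted fiber geometry is in place.
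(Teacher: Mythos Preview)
Your proposal is correct and follows essentially the same route as the paper: the fibering characterization of $m_\beta$ on the cone $\{A<0\}$, boundedness of $\mathcal N_\beta$ for $\beta<1$, the strict inequality $m_\beta<\min\{c_1,c_2\}$ obtained by testing with $(w_1,w_2)$ together with monotonicity in $\beta$ for $\beta\ge0$ and continuity near $\beta=0$ for the negative side, and the test $(\phi_1,\phi_1)$ for the blow-up as $\beta\to1^-$. The only cosmetic differences are that you spell out the attainment and Lagrange-multiplier steps (which the paper dismisses as ``standard arguments'') and record the algebraic identity for $B(u,v)$, which the paper does not use explicitly.
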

For $\beta $ negative, and sufficiently large in absolute value we prove the existence of vectorial solution as a mountain-pass solution. Moreover, we study the asymptotic behavior of these solutions as $\beta \to -\infty$, obtaining a segregation phenomena. 
In this scenario, we have been inspired by the remarkable work \cite{ContiTerraciniVerzini2002}, where  Conti, Terracini, Verzini proved the existence of ground state solutions and segregation argument for competing species. Although, they did not tackle the logistic framework, we are able to extend their type of results to our setting. Regarding the segregation effect, we take advantage of the results developed in \cite{STTZ, NorisTavarseTerraciniVerzini2010} to construct a sign-changing  solution of the associated scalar problem below.

\begin{theorem}\label{t:mountainpass}
	Assume  $\lambda_2\geq \lambda_1>\lambda_1(\Omega)$. If $N\geq2$ and $ 2< p < \min\{4,2^*\}$, then there exist a constant $\delta_\star>0$ and a non-negative vectorial solution $(u_\beta,v_\beta)$ to problem \eqref{LS}  such that 
	\begin{equation}\label{MPenergy}
	c_1+\delta_\star<J_\beta(u_\beta,v_\beta)<-\delta_\star,\end{equation}
 for all $\beta<0$. Moreover, in the classical case  $N=3$ and $p=4$, there exist constants ${\beta_\star},\delta_\star>0$  and  a non-negative vectorial solution $(u_\beta,v_\beta)$ to problem \eqref{LS}  such that,  for $\beta<- {\beta_\star}$, it holds \eqref{MPenergy}.

	In addition, the asymptotic limit of $(u_\beta, v_\beta)$ as $\beta \to -\infty$ is $(w_\infty^+,w_\infty^-)$, where $w_\infty = w_\infty^+ - w_\infty^-$ is a sign-changing solution  to
	\begin{equation}\label{e:SignChanging}
		\left\{
		\begin{array}{lr}
			-\Delta u =\lambda_1 u^+ -\lambda_2 u^- -|u|^{p-2}u\mbox{ in }\Omega,\\
			u^{\pm}\in H_0^1(\Omega).
		\end{array}
		\right.
	\end{equation}
\end{theorem}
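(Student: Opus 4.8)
The plan is to realize $(u_\beta,v_\beta)$ as a mountain-pass critical point of $J_\beta$ sitting strictly between the semi-trivial reference level $c_1$ and the zero level, and then to extract the segregated sign-changing solution of \eqref{e:SignChanging} as the strong-competition limit. Since $\lambda_1,\lambda_2>\lambda_1(\Omega)$, the quadratic part of $J_\beta$ is indefinite, so the origin is not a valley; instead the relevant valleys are the semi-trivial solutions $(w_1,0)$ and $(0,w_2)$, where $w_i>0$ solves the scalar logistic equation $-\Delta w_i=\lambda_i w_i-w_i^{p-1}$, and $c_1$ is the energy of the higher-lying of these two (the one governed by $\lambda_1$). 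For $\beta<0$ the coupling term equals $\tfrac{2|\beta|}{p}\int_\Omega|u|^{p/2}|v|^{p/2}\geq 0$, so any configuration in which both components are simultaneously nonzero on a set of positive measure is energetically penalized: this is the barrier separating the two valleys. I would therefore define
\[
c_\beta=\inf_{\gamma\in\Gamma}\max_{t\in[0,1]}J_\beta(\gamma(t)),
\]
where $\Gamma$ is the family of continuous paths in the nonnegative cone joining $(w_1,0)$ to $(0,w_2)$, restricting if necessary to $\cN_\beta$ to control the geometry, and take $(u_\beta,v_\beta)$ to be the associated critical point.

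Existence then proceeds in standard steps. First I would verify the mountain-pass geometry: any admissible path must cross a set on which $J_\beta$ exceeds $c_1$ by a definite amount, which yields the lower bound $c_\beta>c_1+\delta_\star$ and simultaneously forces the critical point to be genuinely vectorial, neither $u_\beta$ nor $v_\beta$ vanishing. Second, the Palais--Smale condition at level $c_\beta$: boundedness of a PS sequence follows from the finite-dimensionality of the nonpositive subspace of the indefinite quadratic form together with the control provided by the superquadratic terms $\tfrac1p\int_\Omega|u|^p+|v|^p$ and, for $\beta<0$, the good sign of the coupling term; strong convergence is then supplied by the compact embedding $H_0^1(\Omega)\hookrightarrow L^p(\Omega)$ for $2<p<2^*$. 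Nonnegativity is free, since $J_\beta(|u|,|v|)=J_\beta(u,v)$, so the min-max may be run in the nonnegative cone and $(|u_\beta|,|v_\beta|)$ remains a solution. The decisive point is the \emph{uniform} upper bound $J_\beta(u_\beta,v_\beta)<-\delta_\star$: for this I would exhibit a competitor path built from spatially segregated pairs with disjoint supports, along which the coupling term vanishes identically, so that $J_\beta$ restricted to the path coincides with $J_0$ and its maximum is a negative constant independent of $\beta$. This $\beta$-independent upper bound is exactly what makes the asymptotic analysis possible. For the borderline classical case $N=3$, $p=4$, where $p/2=2$ and the coupling $\int_\Omega u^2v^2$ is of the same order as the pure quartic terms, the estimates degenerate, and I would only close the argument for $\beta<-\beta_\star$, using the largeness of $|\beta|$ to recover the fibering structure on $\cN_\beta$ and the mountain-pass geometry.

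For the asymptotic behavior I would first use the uniform energy bounds \eqref{MPenergy}, combined with the Nehari identity \eqref{nehari}, to obtain uniform $H_0^1$-bounds on $(u_\beta,v_\beta)$ and, crucially, a uniform bound on $|\beta|\int_\Omega u_\beta^{p/2}v_\beta^{p/2}$. Passing to a subsequence, $u_\beta\rightharpoonup u_\infty$ and $v_\beta\rightharpoonup v_\infty$ weakly in $H_0^1(\Omega)$ and strongly in $L^p(\Omega)$; since $|\beta|\to\infty$ while the overlap integral stays bounded, $\int_\Omega u_\infty^{p/2}v_\infty^{p/2}=0$, that is $u_\infty v_\infty=0$ a.e., which is the segregation. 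Setting $w_\infty=u_\infty-v_\infty$, one checks that on $\{u_\infty>0\}$ the limit solves the $\lambda_1$-logistic equation and on $\{v_\infty>0\}$ the $\lambda_2$-logistic equation, which together are precisely \eqref{e:SignChanging}. To make this rigorous across the free interface and to guarantee $w_\infty\in H_0^1(\Omega)$ with the equation holding weakly, I would invoke the uniform Lipschitz bounds and the interface analysis for strongly competing systems from \cite{ContiTerraciniVerzini2002,STTZ,NorisTavarseTerraciniVerzini2010}. Finally, the uniform lower bound $c_\beta>c_1+\delta_\star$ prevents either component from disappearing in the limit, so $u_\infty\neq 0\neq v_\infty$ and $w_\infty$ genuinely changes sign.

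The main obstacle I anticipate is twofold. The first difficulty is constructing the segregated competitor path that delivers the $\beta$-uniform negative upper bound while remaining an admissible path joining the two semi-trivial solutions: naive paths pass through overlapping configurations whose energy blows up with $|\beta|$, so the path must transfer mass from one species' region to the other's through disjoint supports, and showing that its maximal energy stays below $-\delta_\star$ requires a careful splitting of $\Omega$ and comparison with the scalar logistic ground states on the sub-domains. The second, and deeper, difficulty is the passage to the limit at the free boundary: establishing strong convergence up to the interface and verifying that $w_\infty$ solves \eqref{e:SignChanging} globally, rather than merely solving the two logistic equations in the interiors of the two phases, is where the regularity theory of \cite{STTZ,NorisTavarseTerraciniVerzini2010} must be deployed most delicately.
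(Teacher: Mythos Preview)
Your overall architecture matches the paper's: a mountain pass for $J_\beta$ between the semi-trivial states $(w_1,0)$ and $(0,w_2)$, followed by the segregation machinery of \cite{STTZ,NorisTavarseTerraciniVerzini2010} for the limit $\beta\to-\infty$. Two points, however, are not what the paper does and one of them is a genuine gap.

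\textbf{The lower barrier.} You assert that every admissible path must cross a region where $J_\beta>c_1+\delta_\star$, but give no mechanism. The paper's argument is concrete: working with the truncated functional $J_\beta^+$ (in which $\lambda_i u^2$ is replaced by $\lambda_i(u^+)^2$, so that nonnegativity of critical points comes from the maximum principle rather than from running the min-max in a cone), it shows by second-order Taylor expansion that $(w_1,0)$ and $(0,w_2)$ are \emph{strict local minima}. In the $e_1$-direction this is the positivity of $D^2J_1(w_1)$, a known scalar fact \cite{OrugantiShiShivaji2002}. In the $e_2$-direction the controlling quantity is
\[
I_{\varepsilon,\beta}(e_2,e_2)=\frac{\varepsilon^2}{2}\int_\Omega\bigl(|\nabla e_2|^2-\lambda_2 e_2^2\bigr)+\frac{2|\beta|\,\varepsilon^{p/2}}{p}\int_\Omega w_1^{p/2}|e_2|^{p/2}.
\]
When $p<4$ the coupling term is of order $\varepsilon^{p/2}\gg\varepsilon^2$ and dominates the (possibly negative) quadratic part; this is exactly why every $\beta<0$ works. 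When $p=4$ both pieces are of order $\varepsilon^2$, and the question becomes whether the Schr\"odinger operator $-\Delta+|\beta|w_1^2-\lambda_2$ is positive definite; the paper invokes the Li--Yau bound \cite{LiYau1983} on the number of nonpositive eigenvalues to manufacture $\beta_\star$. Your diagnosis that at $p=4$ one must ``recover the fibering structure on $\cN_\beta$'' is therefore off target: the Nehari projection \eqref{tuv} is perfectly well defined for all $\beta<1$; what degenerates is the local-minimum property, and large $|\beta|$ restores it by making the potential $|\beta|w_1^2$ overwhelm $\lambda_2$.

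\textbf{The upper barrier.} You propose to build a segregated competitor path (disjoint supports, coupling identically zero) and flag this as a main obstacle. The paper avoids this construction entirely: it takes the linear interpolation $\gamma(s)=((1-s)w_1,\,sw_2)$, projects it onto $\cN_\beta$ via the explicit fibering map \eqref{tuv}, and reads off from the closed formula \eqref{eq:Jbetanehari} that $J_\beta(\tilde\gamma(s))<0$ for every $s\in[0,1]$; since $\tilde\gamma(0)=(w_1,0)$ and $\tilde\gamma(1)=(0,w_2)$, this path is admissible. Your segregated route would yield a cleaner $\beta$-independent negative upper bound, but at the cost of the domain-splitting construction you yourself identify as delicate; the paper's route is shorter.

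The asymptotic part of your sketch is essentially the paper's: uniform $H_0^1$ bounds from Lemma~\ref{l:NehairBoundedness}, then uniform $L^\infty$ bounds (the paper obtains these from the subsolution inequality $-\Delta u_\beta\le\lambda_1 u_\beta$ and \cite{GilbargTrudinger2001} before invoking \cite{STTZ,NorisTavarseTerraciniVerzini2010}), and finally the energy pinching $J_\beta(u_\beta,v_\beta)\in[c_1+\delta_\star,-\delta_\star]$ to rule out loss of a component in the limit.
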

\begin{remark} (i) Note that for $2 < p <\min\{4,2^*\}$ and sufficiently small $|\beta|$, Theorems \ref{t:beta(0,1)} and \ref{t:mountainpass} guarantee the existence of two different non-negative vectorial solutions to Problem \eqref{LS}. Indeed, Theorem \ref{t:beta(0,1)} produces a ground state solution, whose energy $m_\beta$ is below the minimum of the energies of positive semi-trivial solutions. On the other hand, Theorem \ref{t:mountainpass} gives us a solution, whose energy is above the maximum of the energies of positive semi-trivial solutions.
	
\noindent(ii) In the special case $\lambda_1=\lambda_2=:\lambda>\lambda_1(\Omega)$, we have the following synchronized solution
	\begin{align}
		(w_\beta,w_\beta)=\Big(\tfrac{w}{{|\beta-1|^{\frac{1}{p-2}}}},\tfrac{w}{|\beta-1|^{\frac{1}{p-2}}}\Big), \quad \mbox{for any } \beta \neq 1, \nonumber
	\end{align}
	where $w$ is the unique positive solution to
	\begin{equation}
		\left\{
		\begin{array}{lr}
			-\Delta w =\lambda w -w^{p-1}\mbox{ in }\Omega,\nonumber\\
			0<w\in H_0^1(\Omega).\nonumber
		\end{array}
		\right.
	\end{equation}
	This result confirm and describe the solution found in \cite{CZ2021}. Notice that 
	$
		(w_\beta,w_\beta)\to(0,0)\mbox{ in }\cH\nonumber
	$
	as $\beta\to-\infty$.
	This solution is different from the solution $(u_\beta,v_\beta)$ we obtained in the Theorem \ref{t:mountainpass}, since $(u_\beta,v_\beta)$ segregates as $\beta\to-\infty$. Therefore, there exist at least two solutions in this case.
\end{remark}

In what follows the present a non-existence result of positive solutions is obtained to problem \eqref{LS} with $\beta>1$. 
\begin{theorem}\label{t:nonexistencebeta>0}
	Assume that $\beta\geq1$ and $\lambda_1(\Omega)< \lambda_1 \leq \lambda_2$. Problem \eqref{LS} does not admit a positive vectorial solution.
\end{theorem}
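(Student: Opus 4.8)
The plan is to argue by contradiction. Assume that for some $\beta\ge1$ there is a solution $(u,v)$ with $u,v>0$ in $\Omega$; then the absolute values drop and $(u,v)$ solves $-\Delta u=\lambda_1 u-u^{p-1}+\beta u^{p/2-1}v^{p/2}$ and $-\Delta v=\lambda_2 v-v^{p-1}+\beta u^{p/2}v^{p/2-1}$. Let $\phi_1>0$ be a first Dirichlet eigenfunction of $-\Delta$ on $\Omega$, so that $-\Delta\phi_1=\lambda_1(\Omega)\phi_1$. The central tool I would use is Picone's inequality, which for a positive function $u$ reads $\int_\Omega|\nabla\phi_1|^2\ge\int_\Omega\frac{\phi_1^2}{u}(-\Delta u)$. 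Its application is legitimate here because, by the Hopf boundary lemma, both $u$ and $\phi_1$ vanish linearly at $\partial\Omega$, so $\phi_1^2/u$ is an admissible competitor; a fully rigorous version follows by testing against $\phi_1^2/(u+\varepsilon)$ and letting $\varepsilon\to0$.

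First I would insert the equation for $u$ into Picone's inequality. Using $\int_\Omega|\nabla\phi_1|^2=\lambda_1(\Omega)\int_\Omega\phi_1^2$ together with $\frac{\phi_1^2}{u}(-\Delta u)=\phi_1^2\bigl[\lambda_1-u^{p-2}+\beta u^{p/2-2}v^{p/2}\bigr]$, this gives
\[
\int_\Omega\phi_1^2\,u^{p-2}-\beta\int_\Omega\phi_1^2\,u^{p/2-2}v^{p/2}\ \ge\ (\lambda_1-\lambda_1(\Omega))\int_\Omega\phi_1^2 .
\]
Repeating the computation with $v$ yields the symmetric inequality with $u\leftrightarrow v$ and $\lambda_1\to\lambda_2$. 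Writing $s:=p/2$ and adding the two inequalities, the left-hand side becomes $\int_\Omega\phi_1^2\bigl[u^{2s-2}+v^{2s-2}-\beta u^{s-2}v^{s-2}(u^2+v^2)\bigr]$, while the right-hand side is the strictly positive quantity $(\lambda_1+\lambda_2-2\lambda_1(\Omega))\int_\Omega\phi_1^2$, positivity being guaranteed by the hypothesis $\lambda_2\ge\lambda_1>\lambda_1(\Omega)$.

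The contradiction is then extracted from $\beta\ge1$. Since $\beta\ge1$ and $u,v>0$, the bracket is bounded above by $u^{2s-2}+v^{2s-2}-u^{s-2}v^{s-2}(u^2+v^2)=(u^s-v^s)(u^{s-2}-v^{s-2})$. For $2<p\le4$ one has $s=p/2\in(1,2]$, so $t\mapsto t^{s}$ is increasing while $t\mapsto t^{s-2}$ is non-increasing; hence $(u^s-v^s)(u^{s-2}-v^{s-2})\le0$ pointwise, forcing the left-hand side to be $\le0$, which contradicts the strict positivity of the right-hand side. In the classical case $N=3$, $p=4$ the bracket collapses to $(1-\beta)(u^2+v^2)\le0$, the cleanest instance of the mechanism. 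I expect the main obstacle to be the range $p>4$: there $s>2$, both maps are increasing and $(u^s-v^s)(u^{s-2}-v^{s-2})\ge0$, so the simple summation no longer closes. Covering $p>4$ seems to require either a generalized (weighted) Picone inequality arranged so that the coupling terms enter with a symmetric weight, or a comparison argument exploiting the cooperative structure and the ordering $\lambda_2\ge\lambda_1$; producing such an inequality while keeping the clean eigenvalue identity is the genuinely delicate step.
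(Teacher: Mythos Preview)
Your Picone-inequality argument is correct and self-contained for $2<p\le 4$, and it is genuinely different from the paper's proof. However, as you yourself flag, the range $p>4$ (relevant when $N=2$ or $N=3$, since only then can $2^*$ exceed $4$) is not covered: the factor $(u^{s}-v^{s})(u^{s-2}-v^{s-2})$ becomes nonnegative and the contradiction evaporates. So as a proof of the theorem in the stated generality, the proposal has a gap.

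The paper closes this gap precisely via the ``comparison argument exploiting the cooperative structure and the ordering $\lambda_2\ge\lambda_1$'' that you allude to at the end. Concretely, one first shows $v\ge u$ in $\Omega$: on the set $\{u>v\}$, subtract $u$ times the $v$-equation from $v$ times the $u$-equation and integrate. The boundary term is nonpositive by Hopf's lemma (since $u-v$ attains its maximum on the boundary of this set), while the interior contribution
\[
(\lambda_2-\lambda_1)\!\int uv
\;+\;\int uv\,(u^{p-2}-v^{p-2})
\;+\;\beta\!\int u^{p/2-1}v^{p/2-1}(u^{2}-v^{2})
\]
is strictly positive on $\{u>v\}$ for every $p>2$ and every $\beta>0$. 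Hence $\{u>v\}=\emptyset$. With the ordering $v\ge u$ in hand, a single test of the $u$-equation against $\phi_1$ (no Picone needed) gives
\[
\lambda_1(\Omega)\!\int u\phi_1
=\lambda_1\!\int u\phi_1-\!\int u^{p-1}\phi_1+\beta\!\int u^{p/2-1}v^{p/2}\phi_1
\ge \lambda_1\!\int u\phi_1+(\beta-1)\!\int u^{p-1}\phi_1
\ge \lambda_1\!\int u\phi_1,
\]
which contradicts $\lambda_1(\Omega)<\lambda_1$. This two-step route works for all $p\in(2,2^*)$, whereas your symmetric-Picone route trades that generality for a shorter argument when $p\le 4$.
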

Moreover, a vectorial solution is found, which in fact will have to change sign at least for one of the component functions.
\begin{theorem}\label{t:existencebeta>1}
	Assume that $\beta\geq1$ and $\lambda_1(\Omega)< \lambda_1 \leq \lambda_2$. Problem \eqref{LS} admits a vectorial solution $(u, v)$, and at least one of the component $u$ or $v$ changes sign.
\end{theorem}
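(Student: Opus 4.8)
The plan is to prove existence of a vectorial solution to \eqref{LS} in the regime $\beta \geq 1$ by working on the Nehari manifold $\cN_\beta$ and showing that the minimum energy level $m_\beta$ is attained. First I would observe that when $\beta \geq 1$ the quartic-type coupling term dominates, which is precisely the mechanism behind the non-existence result of Theorem~\ref{t:nonexistencebeta>0}: the quadratic form $\io |u|^p + |v|^p - 2\beta |u|^{p/2}|v|^{p/2}$ is no longer positive-definite in the pair $(|u|^{p/2},|v|^{p/2})$, so a minimizer cannot keep both components of the same sign. The key point to establish at the outset is that $\cN_\beta$ is nonempty and that the fibering map $t \mapsto J_\beta(tu,tv)$ still has a well-defined positive maximum for suitable $(u,v)$; because $p>2$, the $L^p$ terms control the sign of the leading coefficient along rays, so for any $(u,v)$ with the Nehari expression having the correct sign there is a unique projection onto $\cN_\beta$.

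The core of the argument is a minimization of $J_\beta$ over $\cN_\beta$. On $\cN_\beta$ one has $J_\beta(u,v) = \left(\tfrac12 - \tfrac1p\right)\io |u|^p + |v|^p - 2\beta |u|^{p/2}|v|^{p/2}$, which, using the Nehari constraint, can be rewritten purely in terms of the gradient and $\lambda_i$ quadratic terms; the standard computation shows $J_\beta$ is coercive and bounded below on $\cN_\beta$ once one checks the quadratic part stays controlled, so $m_\beta$ is finite. I would then take a minimizing sequence $(u_n,v_n)$, extract a weakly convergent subsequence in $\cH$ (bounded by coercivity), and pass to the limit. The Brezis--Lieb lemma together with compact Sobolev embedding $H_0^1(\Omega)\hookrightarrow L^p(\Omega)$ for $2<p<2^*$ on the bounded domain $\Omega$ handles the nonlinear terms, yielding a weak limit $(u,v)$ that lies on $\cN_\beta$ and attains $m_\beta$; a Lagrange-multiplier / fibering argument then upgrades this constrained minimizer to a genuine critical point of $J_\beta$, hence a weak solution of \eqref{LS}.

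The second half of the statement — that the solution is genuinely vectorial (both $u\neq 0$ and $v\neq 0$) and that at least one component changes sign — is where the real obstacle lies, and it is precisely here that the $\beta\geq 1$ hypothesis is exploited. To rule out semi-trivial minimizers, I would compare $m_\beta$ with the energy levels of the two scalar logistic problems (the semi-trivial solutions); showing $m_\beta$ strictly below the smaller semi-trivial level forces both components nonzero. This strict inequality should follow by a test-function computation: starting from a semi-trivial solution $(w,0)$ one perturbs in the second component and uses the coupling term $-\tfrac{2\beta}{p}\io |u|^{p/2}|v|^{p/2}$, which for $\beta\geq 1$ \emph{lowers} the energy, so the semi-trivial configuration cannot be a minimizer. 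Finally, to force sign-change, I would invoke Theorem~\ref{t:nonexistencebeta>0}: since no positive vectorial solution exists for $\beta\geq 1$, the vectorial minimizer just produced cannot be taken with both components positive, and since replacing $(u,v)$ by $(|u|,|v|)$ does not increase the energy along the Nehari constraint while preserving membership in $\cN_\beta$, a non-sign-changing minimizer would give a positive solution, a contradiction. Thus at least one component must change sign.

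The main difficulty I anticipate is the strict energy comparison $m_\beta < \min\{\text{semi-trivial levels}\}$, because the coupling term is degenerate (it vanishes wherever either component vanishes) and the competing influence of the $\lambda_i$ terms must be tracked carefully; a delicate choice of test functions, possibly localized where a given component is already large, will be needed to extract the strict inequality uniformly in the range $\beta\geq 1$. A secondary technical subtlety is verifying that the weak limit does not escape to the boundary of $\cN_\beta$ (i.e.\ that the Nehari expression does not degenerate to zero in the limit), which again reduces to the strict level inequality together with the $L^p$-compactness on the bounded domain.
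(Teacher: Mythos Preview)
Your approach has a genuine gap at its core: for $\beta\geq 1$ the Nehari infimum $m_\beta$ is \emph{not} finite, so the minimization program cannot even get started. To see this, take the test pair $(\phi_1,a\phi_1)$ with $a\in(0,1)$; one computes
\[
P(a):=\dint_\Omega |u|^p+|v|^p-2\beta|u|^{p/2}|v|^{p/2}
=\bigl(1+a^{p}-2\beta a^{p/2}\bigr)|\phi_1|_p^p,
\]
while $Q(\phi_1,a\phi_1)<0$ for every $a$. For $\beta\geq 1$ the polynomial $1+s^2-2\beta s$ has a root $s_0=\beta-\sqrt{\beta^2-1}\in(0,1]$, so as $a^{p/2}\to s_0^-$ one gets $P(a)\to 0^+$ while $Q$ stays bounded away from zero. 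Hence $\hat t=(-Q/P)^{1/(p-2)}\to\infty$ and, by the formula \eqref{m_beta}, $J_\beta(\hat t(u,v))\to-\infty$. This is exactly the phenomenon behind Proposition~\ref{prop:betato1}, and it persists for all $\beta\geq 1$. Consequently your claimed coercivity of $J_\beta$ on $\cN_\beta$ is false, and no minimizer exists.

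The paper circumvents this by abandoning the Nehari constraint entirely and building a \emph{linking} structure. One splits $\cH=\cH^-\oplus\cH^0\oplus\cH^+$ according to the sign of the quadratic form $Q$, observes that $J_\beta$ is uniformly positive on small spheres in $\cH^+$ (Lemma~\ref{lem:geo2}), and that $J_\beta(u,u)\to-\infty$ along the diagonal in finite-dimensional enlargements $V_j$ of $\cH^-\oplus\cH^0$ (Lemma~\ref{lem:geo3}); the Cerami condition (Lemma~\ref{lem:Cc}) then lets an abstract linking theorem produce a critical value $c_j\geq\alpha>0$. The crucial point is that this critical value is \emph{positive}, whereas every semi-trivial critical point has negative energy by Lemma~\ref{eq:Pi}(iv); that is how vectoriality is forced, not by a strict inequality $m_\beta<\min\{c_1,c_2\}$ as you propose (which goes in the wrong direction here). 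Your final step, invoking Theorem~\ref{t:nonexistencebeta>0} to conclude that at least one component changes sign, is the same as in the paper and is correct once a vectorial solution is in hand.
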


%%%%%%%%%%%%%%%%%%%%%%%%%%%%%%%%%%%%%%%%%%%%%%%%%%%%%%%%%%%%%%%Ground State Solution

\section{The ground state solution for $\beta\in[-\delta_0,1)$}
We prove Theorem \ref{t:beta(0,1)} in the some steps.
First, when $|\beta|$ is small, we prove the following proposition.
\begin{proposition}\label{prop:SmallBeta}
	There exists a constant such that $\delta_0=\delta_0(\lambda_1,\lambda_2,\Omega)>0$ such that if $|\beta|\leq\delta_0$, Problem \eqref{LS} admits a non-negative  minimizer $(u_\beta,v_\beta)$, which is a vectorial solution.
\end{proposition}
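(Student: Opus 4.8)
The plan is to exploit a feature special to the range $\beta<1$ that the paper's subsequent min-max arguments cannot use: for such $\beta$ the functional $J_\beta$ is bounded below and coercive on $\cH$, so a non-negative minimizer can be produced by \emph{direct} minimization. The starting point is the elementary estimate, via Young's inequality $2|u|^{p/2}|v|^{p/2}\le |u|^p+|v|^p$,
\[\int_\Omega |u|^p+|v|^p-2\beta\int_\Omega |u|^{p/2}|v|^{p/2}\ \ge\ (1-\beta^+)\int_\Omega |u|^p+|v|^p,\]
where $\beta^+:=\max\{\beta,0\}$; this right-hand side is strictly positive for $(u,v)\neq(0,0)$ precisely because $\beta<1$. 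Combining it with the bound $\|w\|_2^2\le \varepsilon\|w\|_p^p+C_\varepsilon$ on the bounded domain $\Omega$ (valid since $p>2$) to absorb the indefinite terms $-\lambda_i\|\cdot\|_2^2$, I obtain $J_\beta(u,v)\ge \tfrac12(\|\nabla u\|_2^2+\|\nabla v\|_2^2)-C$, which gives coercivity and boundedness from below on $\cH$. Since $2<p<2^*$, the embedding $H_0^1(\Omega)\hookrightarrow L^p(\Omega)$ is compact, so the subquadratic and coupling terms are weakly continuous while the Dirichlet part is weakly lower semicontinuous; a minimizing sequence therefore converges weakly to a minimizer $(u_\beta,v_\beta)$. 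Replacing it by $(|u_\beta|,|v_\beta|)$ leaves $J_\beta$ unchanged and does not raise the Dirichlet energy, so I may take it non-negative; as a free critical point it is a weak solution of \eqref{LS}, it satisfies the Nehari identity, hence lies on $\mathcal{N}_\beta$ and realizes $m_\beta$.

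Next I would check nontriviality. Testing with $(t\varphi_1,t\varphi_1)$, where $\varphi_1>0$ is the first Dirichlet eigenfunction of $-\Delta$ associated with $\lambda_1(\Omega)$, gives
\[J_\beta(t\varphi_1,t\varphi_1)=\tfrac{t^2}{2}\bigl(2\lambda_1(\Omega)-\lambda_1-\lambda_2\bigr)\|\varphi_1\|_2^2+O(t^p),\]
which is negative for small $t>0$ because $\lambda_1,\lambda_2>\lambda_1(\Omega)$. Thus $m_\beta<0=J_\beta(0,0)$, so $(u_\beta,v_\beta)\neq(0,0)$, and by the strong maximum principle each nonvanishing component is strictly positive in $\Omega$.

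The decisive step, and the main obstacle, is to show the minimizer is \emph{vectorial}, i.e. that neither component vanishes. The restrictions of $J_\beta$ to the semi-trivial sets $\{v=0\}$ and $\{u=0\}$ coincide with the scalar logistic functionals $I_{\lambda_1}(w)=\tfrac12\int_\Omega|\nabla w|^2-\lambda_1 w^2+\tfrac1p\int_\Omega|w|^p$ and $I_{\lambda_2}$, whose infima $c_{\lambda_1},c_{\lambda_2}<0$ are \emph{independent of $\beta$} and attained by positive scalar ground states. At $\beta=0$ the system decouples and $m_0=c_{\lambda_1}+c_{\lambda_2}<\min\{c_{\lambda_1},c_{\lambda_2}\}$, the strict inequality holding because both scalar levels are negative. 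I would then establish $m_\beta\to m_0$ as $\beta\to0$: the upper bound $\limsup_{\beta\to0}m_\beta\le m_0$ follows by evaluating $J_\beta$ at the decoupled minimizer, while the lower bound uses the coercivity bound (uniform in $\beta$ near $0$) to control $(u_\beta,v_\beta)$ in $\cH$, so that the coupling term is $O(\beta)$ and $J_0(u_\beta,v_\beta)=m_\beta+o(1)\ge m_0$. Consequently there is $\delta_0=\delta_0(\lambda_1,\lambda_2,\Omega)>0$ with $m_\beta<\min\{c_{\lambda_1},c_{\lambda_2}\}$ whenever $|\beta|\le\delta_0$.

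Finally, since every semi-trivial configuration has energy at least $\min\{c_{\lambda_1},c_{\lambda_2}\}$, a minimizer at the strictly lower level $m_\beta$ cannot be semi-trivial, and being also nontrivial it must satisfy $u_\beta\neq0$ and $v_\beta\neq0$; that is, it is the desired non-negative vectorial solution. The only points demanding genuine care are the uniform control of the minimizers as $\beta\to0$ and the weak-convergence bookkeeping in the lower-bound half of $m_\beta\to m_0$; the coercivity, lower semicontinuity, and maximum-principle arguments are otherwise routine.
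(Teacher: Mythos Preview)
Your argument is correct and complete. The route differs from the paper's in two respects worth noting.

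First, you obtain the minimizer by \emph{global} minimization of $J_\beta$ on $\cH$, using the coercivity estimate that follows from $2|u|^{p/2}|v|^{p/2}\le |u|^p+|v|^p$ together with the interpolation $\|w\|_2^2\le\varepsilon\|w\|_p^p+C_\varepsilon$. The paper instead minimizes on the Nehari set $\mathcal N_\beta$, relying on Lemma~\ref{l:NehairBoundedness} (boundedness of $\mathcal N_\beta$ for $\beta<1$) and the projection Lemma~\ref{projection}. The two approaches yield the same object: since the global minimum is negative, it is attained at a nonzero critical point, which automatically lies on $\mathcal N_\beta$ and hence realizes $m_\beta$. Your route is more self-contained; the paper's fits its Nehari-based framework used throughout Sections~2 and~3.

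Second, for vectoriality both arguments reduce to the strict inequality $m_\beta<\min\{c_1,c_2\}$ for small $|\beta|$. The paper (Lemmas~\ref{l:tcontinuous}--\ref{l:nontrivialenergy}) obtains this by projecting $(w_1,w_2)$ onto $\mathcal N_\beta$ and using continuity of the projection parameter $t_{w_1,w_2,\beta}$ in $\beta$, giving $m_\beta\le J_\beta\bigl(t_{w_1,w_2,\beta}(w_1,w_2)\bigr)\to c_1+c_2$. You instead test $J_\beta$ directly at the decoupled minimizer and prove the two-sided convergence $m_\beta\to m_0=c_1+c_2$. Note that the lower-bound half of this convergence, which you flag as the delicate point, is in fact unnecessary: the upper bound $\limsup_{\beta\to 0}m_\beta\le c_1+c_2<\min\{c_1,c_2\}$ alone already yields the desired $\delta_0$. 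So your argument can be shortened, but as written it is valid.
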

In order to guarantee such a result, we need the lemmas below. 
\begin{lemma}\label{l:NehairBoundedness}
	If $\beta<1$, there exists a constant $C_\beta>0$ such that for any ${(u,v)\in\mathcal{N}_\beta}$, $\|(u,v)\|\leq C_\beta$. Moreover, if $\beta \leq 0$, then $C_\beta \leq C_0$, which is a uniform bound independent of $\beta$.
\end{lemma}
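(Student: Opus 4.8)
The plan is to read off from the defining constraint of $\mathcal{N}_\beta$ an exact expression for the $\cH$-norm and then bound its right-hand side by a constant. For $(u,v)\in\mathcal{N}_\beta$ the Nehari identity rearranges to
\begin{equation*}
\|(u,v)\|^2 = \lambda_1\|u\|_2^2 + \lambda_2\|v\|_2^2 - \|u\|_p^p - \|v\|_p^p + 2\beta\int_\Omega |u|^{\frac{p}{2}}|v|^{\frac{p}{2}},
\end{equation*}
so everything reduces to estimating the right-hand side. The two terms that could obstruct boundedness are the coupling integral (only when $\beta>0$) and the quadratic terms $\lambda_i\|\cdot\|_2^2$; the superquadratic terms $-\|u\|_p^p-\|v\|_p^p$ work in our favor and should be used to absorb the rest.

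First I would dispose of the coupling term by Young's inequality $|u|^{p/2}|v|^{p/2}\le \tfrac12(|u|^p+|v|^p)$. If $\beta\le 0$ the coupling is nonpositive and is simply dropped; if $0<\beta<1$ it is controlled by $\beta(\|u\|_p^p+\|v\|_p^p)$. In either case, combining with $-\|u\|_p^p-\|v\|_p^p$ leaves a strictly negative multiple $-(1-\beta)(\|u\|_p^p+\|v\|_p^p)$ of the superquadratic terms, and here the hypothesis $\beta<1$ is exactly what keeps the coefficient $1-\beta$ positive.

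Next I would control the quadratic terms by the superquadratic ones. Since $\Omega$ is bounded and $p>2$, Hölder gives $\|u\|_2^2\le|\Omega|^{(p-2)/p}\|u\|_p^2$, and then the elementary inequality $a^2\le \varepsilon a^p + C(\varepsilon)$ (valid for $a\ge0$, $p>2$) lets me write $\lambda_1\|u\|_2^2+\lambda_2\|v\|_2^2 \le \tfrac{1-\beta}{2}(\|u\|_p^p+\|v\|_p^p) + C$, provided $\varepsilon$ is chosen small enough depending on $\lambda_2,\Omega,p$ and on $1-\beta$. Feeding this back into the identity, the superquadratic terms dominate and I obtain $\|(u,v)\|^2\le C_\beta$, which proves the first assertion.

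The uniformity for $\beta\le 0$ requires no new idea: when $\beta\le0$ the coupling is discarded rather than absorbed, so the surviving negative coefficient on $\|u\|_p^p+\|v\|_p^p$ is exactly $1$, not $1-\beta$; consequently the admissible threshold for $\varepsilon$ and the resulting constant depend only on $\lambda_2,\Omega,p$, yielding a bound $C_0$ independent of $\beta$. The only delicate point of the whole argument is the $0<\beta<1$ regime: one must make sure that the same factor $1-\beta$ left over by Young's inequality is the one used to fix $\varepsilon$, so that the constant degenerates precisely as $\beta\to1^-$ (consistent with $m_\beta\to-\infty$ there) while remaining finite for each fixed $\beta<1$.
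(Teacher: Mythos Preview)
Your argument is correct and follows the same route as the paper: rearrange the Nehari identity, handle the coupling term by Young's inequality (dropping it for $\beta\le 0$, absorbing it into $\|u\|_p^p+\|v\|_p^p$ for $0<\beta<1$), and pass from $L^2$ to $L^p$ via H\"older so that the surviving $-(1-\beta)$ multiple of the superquadratic terms dominates. The only cosmetic difference is in the final balancing step: the paper first bounds $|u|_p^2+|v|_p^2$ via the power inequality $a^{p/2}+b^{p/2}\ge 2^{1-p/2}(a+b)^{p/2}$ and then controls the gradient, whereas you absorb the quadratic terms directly with $a^2\le\varepsilon a^p+C(\varepsilon)$; both yield the same dependence on $1-\beta$ and the same uniform constant for $\beta\le 0$.
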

\begin{proof}
	For any $(u,v)\in\mathcal{N}_\beta$, by definition,
	\begin{align}\label{Equa:Nehari}
		\dint_{\Omega}|\nabla u|^2-\lambda_1 u^2+|\nabla v|^2-\lambda_2 v^2 +\dint_{\Omega} |u|^p+|v|^p-2\beta\dint_{\Omega} |u|^{\frac{p}{2}}|v|^{\frac{p}{2}}=0.
	\end{align}
Using  H\"older's and Young inequalities, we deduce
	\begin{align}
		\dint_{\Omega} |u|^p + |v|^p\leq \dint_{\Omega}|\nabla u|^2 +|\nabla v|^2 +\dint_{\Omega} |u|^p + |v|^p\leq C(|u|_p^2+|v|_p^2)+\beta\dint_{\Omega} |u|^p + |v|^p,\nonumber
	\end{align}
	for some constant $C>0$ depending on $\lambda_1,\lambda_2$ and $\Omega$. 	Since $\beta<1$,
	\begin{align}
		\dfrac{1}{2^{\frac{p}{2}-1}}(|u|_p^2+|v|_p^2)^\frac{p}{2} \leq |u|_p^p+|v|_p^p\leq\frac{C}{1-\beta}(|u|_p^2+|v|_p^2) \implies |u|_p^2+|v|_p^2\leq2\left(\frac{C}{1-\beta}\right)^{\frac{1}{\frac{p}{2}-1}}.\nonumber
	\end{align}
	Using (\ref{Equa:Nehari}) again,
	\begin{align*}
		\dint_{\Omega}|\nabla u|^2+\dint_{\Omega}|\nabla v|^2\leq C(|u|_p^2+|v|_p^2)+(\beta-1)\dint_{\Omega} |u|^p + |v|^p\leq 2C\left(\frac{C}{1-\beta}\right)^{\frac{1}{\frac{p}{2}-1}}=:C^2_\beta.
	\end{align*}
	In case $\beta \leq 0$, we observe that $ C^2_\beta \leq C^2_0$, which is independent of $\beta.$
\end{proof}
%As a consequence of Lemma \ref{l:NehairBoundedness} we  conclude that$m_\beta$is finite.

Now we analyze the set $\mathcal{N}_\beta$. By a direct computation, we obtain the following result.  Notice that $\int_{\Omega} |u|^p + |v|^p-2\beta |u|^{\frac{p}{2}}|v|^{\frac{p}{2}}>0$, as $\beta<1$.
Let us define 
\begin{align}\label{L_}
	L^-=\left\{(u,v)\in \cH:\dint_{\Omega}|\nabla u|^2-\lambda_1 u^2+|\nabla v|^2-\lambda_2 v^2<0\right\}.\nonumber
\end{align}

\begin{lemma}\label{projection}
	If $\beta<1$, for any $(u,v)\in L^-$ there exits $\hat t\in \R^+$ given by 
	\begin{equation}\label{tuv}
		\hat t :=  t_{u,v,\beta}=\left(\frac{-\dint_{\Omega}|\nabla u|^2-\lambda_1 u^2+|\nabla v|^2-\lambda_2 v^2}{\dint_{\Omega} |u|^p + |v|^p-2\beta |u|^{\frac{p}{2}}|v|^{\frac{p}{2}}}\right)^\frac{1}{p-2}
	\end{equation}
	such that $\hat t (u, v)\in\mathcal{N}_\beta$. Furthermore, $m_\beta$ can be characterized by
	\begin{align}\label{m_beta}
		m_\beta=\inf_{(u,v)\in L^-}J_\beta(\hat t(u,v))=\inf_{(u,v)\in L^-}-\dfrac{p-2}{2p}\dfrac{\left(-\dint_{\Omega}|\nabla u|^2-\lambda_1 u^2+|\nabla v|^2-\lambda_2 v^2\right)^{\frac{p}{p-2}}}{\left(\dint_{\Omega} |u|^p + |v|^p-2\beta |u|^{\frac{p}{2}}|v|^{\frac{p}{2}}\right)^\frac{2}{p-2}}.
	\end{align}
\end{lemma}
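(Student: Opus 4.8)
The plan is to reduce everything to the one--dimensional fibering map along rays through the origin. Abbreviate the quadratic and the $p$--homogeneous parts of $J_\beta$ by
\[
A(u,v):=\dint_{\Omega}|\nabla u|^2-\lambda_1 u^2+|\nabla v|^2-\lambda_2 v^2,
\qquad
B(u,v):=\dint_{\Omega}|u|^p+|v|^p-2\beta|u|^{\frac p2}|v|^{\frac p2},
\]
so that $J_\beta=\tfrac12 A+\tfrac1p B$, the set $L^-$ equals $\{A<0\}$, and $\mathcal{N}_\beta=\{(u,v)\neq(0,0):A+B=0\}$; here $A$ is $2$--homogeneous and $B$ is $p$--homogeneous in $(u,v)$. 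As already noted before the statement, for $\beta<1$ one has $B(u,v)>0$ whenever $(u,v)\neq(0,0)$: pointwise $a^2+b^2-2\beta ab\geq c(\beta)(a^2+b^2)$ with $c(\beta)>0$ for $a=|u|^{p/2},\,b=|v|^{p/2}$. In particular $B>0$ on $L^-$, since $A<0$ forces $(u,v)\neq(0,0)$. Now fix $(u,v)\in L^-$ and set $g(t):=J_\beta(tu,tv)=\tfrac12 t^2A(u,v)+\tfrac1p t^pB(u,v)$ for $t>0$. Then $g'(t)=t\big(A(u,v)+t^{p-2}B(u,v)\big)$, and since $t\mapsto A+t^{p-2}B$ is strictly increasing from $A<0$ to $+\infty$, it vanishes at a single point, namely $\hat t=\big(-A/B\big)^{1/(p-2)}$, which is exactly \eqref{tuv}; thus $\hat t$ is the unique minimum of $g$ on $(0,\infty)$. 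Finally $A(\hat tu,\hat tv)+B(\hat tu,\hat tv)=\hat t^2A+\hat t^pB=\hat t^2(A+\hat t^{p-2}B)=0$, so $\hat t(u,v)\in\mathcal{N}_\beta$.

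Next I would evaluate $g$ at $\hat t$ to obtain the closed form. Writing $s:=-A/B>0$, so that $\hat t^{p-2}=s$ and $A=-sB$, and using $\tfrac{2}{p-2}+1=\tfrac{p}{p-2}$, one finds
\[
J_\beta(\hat t(u,v))=\tfrac12 s^{\frac{2}{p-2}}A+\tfrac1p s^{\frac{p}{p-2}}B=\Big(\tfrac1p-\tfrac12\Big)s^{\frac{p}{p-2}}B=-\frac{p-2}{2p}\,\frac{\big(-A(u,v)\big)^{\frac{p}{p-2}}}{B(u,v)^{\frac{2}{p-2}}},
\]
after substituting $s=-A/B$ and collecting the powers of $B$. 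This is the integrand of the right--hand side of \eqref{m_beta}, so the second equality in \eqref{m_beta} follows once the first one is established.

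For the first equality, $m_\beta=\inf_{(u,v)\in L^-}J_\beta(\hat t(u,v))$, the decisive remark is that $\mathcal{N}_\beta\subseteq L^-$: on $\mathcal{N}_\beta$ one has $A=-B<0$, hence $A<0$. Consequently the projection $(u,v)\mapsto\hat t(u,v)$ maps $L^-$ into $\mathcal{N}_\beta$, while on $\mathcal{N}_\beta$ itself $-A/B=1$ gives $\hat t=1$, so the projection fixes $\mathcal{N}_\beta$ pointwise. Therefore the two sets of values coincide, $\{J_\beta(\hat t(u,v)):(u,v)\in L^-\}=\{J_\beta(w):w\in\mathcal{N}_\beta\}$, and passing to infima yields $\inf_{L^-}J_\beta(\hat t(\cdot))=\inf_{\mathcal{N}_\beta}J_\beta=m_\beta$. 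Equivalently, a two--sided inequality works: $J_\beta(\hat t(u,v))\geq m_\beta$ for all $(u,v)\in L^-$ because $\hat t(u,v)\in\mathcal{N}_\beta$, and the reverse holds since every $w\in\mathcal{N}_\beta$ lies in $L^-$ and equals $\hat t(w)$.

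I expect no essential difficulty here; this is the standard Nehari fibering reduction, the only unusual feature being that on $L^-$ the ray map $g$ attains a minimum (with negative value) rather than a maximum, reflecting the indefinite geometry of $J_\beta$. The single point deserving care is the strict positivity of $B$ on $L^-$ for every $\beta<1$ --- in particular for $\beta\leq-1$, where positive--definiteness of the quadratic form $a^2+b^2-2\beta ab$ fails and one instead uses $-2\beta ab\geq0$ --- since this is what makes $\hat t$ real and well defined and guarantees the inclusion $\mathcal{N}_\beta\subseteq L^-$.
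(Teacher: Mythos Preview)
Your proof is correct and follows the same standard fibering approach as the paper, which merely cites ``standard calculations'' for the existence of $\hat t$ and then performs the same algebraic evaluation of $J_\beta(\hat t(u,v))$. In fact you are more thorough: you justify the first equality in \eqref{m_beta} via the inclusion $\mathcal{N}_\beta\subseteq L^-$ and the fact that the projection fixes $\mathcal{N}_\beta$, whereas the paper leaves this implicit.
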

\begin{proof}
	Standard calculations using \eqref{nehari} yield the existence of $\hat t\in \R^+$ given by \eqref{tuv}. Furthermore,
	\begin{align}\label{eq:Jbetanehari}
		J_\beta(\hat t(u,v))&=\frac{\hat t ^2}{2}\dint_{\Omega}|\nabla u|^2-\lambda_1 u^2+|\nabla v|^2-\lambda_2 v^2+\frac{\hat t^p}{p}\dint_{\Omega} (|u|^p + |v|^p-2\beta |u|^{\frac{p}{2}}|v|^{\frac{p}{2}})\nonumber\\
		&= \dfrac{\hat t^2}{2}  \left( \dint_{\Omega}|\nabla u|^2-\lambda_1 u^2+|\nabla v|^2-\lambda_2 v^2 +\frac{2\hat t^{p-2}}{p}\dint_{\Omega} (|u|^p + |v|^p-2\beta |u|^{\frac{p}{2}}|v|^{\frac{p}{2}})  \right)\nonumber\\
		&	=-\dfrac{p-2}{2p}\dfrac{\left(-\dint_{\Omega}|\nabla u|^2-\lambda_1 u^2+|\nabla v|^2-\lambda_2 v^2\right)^{\frac{p}{p-2}}}{\left(\dint_{\Omega} |u|^p + |v|^p-2\beta |u|^{\frac{p}{2}}|v|^{\frac{p}{2}}\right)^\frac{2}{p-2}}.
	\end{align}
\end{proof}

We denote by $w_i$ the unique positive solution to 
\begin{equation}\label{P_scalar}
	\left\{
	\begin{array}{lr}
		-\Delta u =\lambda_i u -u^{p-1}\mbox{ in }\Omega,\\
		0<u\in H_0^1(\Omega)
	\end{array}
	\right. \mbox{ \ for \ } i=1,2.
\end{equation}
for $i=1,2$, respectively. Defining 
\begin{align}
J_i(u)=\frac{1}{2}\dint_{\Omega} |\nabla u|^2-\lambda_i u^2+\frac{1}{p}\dint_{\Omega} |u|^p,\nonumber
\end{align}
associated with the scalar problem \eqref{P_scalar}, we set $c_i:=J_i(w_i)$. 
We refer \cite{CardosoFurtadoMaia2024} for further information.
We observe that for $\beta =0$, it holds that $J_0(u,v) = J_1(u)+ J_2(v)$ and so, 
$t_{w_1,w_2,0}=1$. 

We recall that by the well known results in the literature, \cite{AT, BZ, LG, Ouy} we have the following result.

\begin{lemma}\label{eq:Pi}
	Assume that,  $\lambda_1(\Omega)< \lambda_1\leq\lambda_2$. It holds that
	\begin{itemize}
		\item [$(i)$] $(u_1,0)$ and $(0,u_2)$ are semi-trivial solutions to problem \eqref{LS}, where $u_i$ solves
		\eqref{P_scalar};		
		
		\item [$(ii)$] $J_\beta(w_1,0)=J_1(w_1)=:c_1<0$ and $J_\beta(0,w_2)=J_2(w_2)=:c_2\leq c_1<0$;
		\item [$(iii)$] $(w_1,0)$ and $(0,w_2)$ are the only semi-trivial non-negative solutions to problem \eqref{LS};
		\item[(iv)] Any semi-trivial critical point of $J_\beta$ has negative energy.
	\end{itemize}
\end{lemma}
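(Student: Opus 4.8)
The plan is to reduce each of the four items to the scalar logistic theory for \eqref{P_scalar} together with the Nehari identity already used in \eqref{eq:Jbetanehari}, treating them in the order stated since (iii) and (iv) reuse the computation behind (ii). For (i) I would substitute $v\equiv 0$ into \eqref{LS}: every coupling term carries a positive power of $|v|$ (equivalently, in \eqref{J'} the cross term $2\beta uv\varphi$ vanishes when $v=0$), so the second equation collapses to $0=0$ and the first becomes $-\Delta u_1=\lambda_1 u_1-|u_1|^{p-2}u_1$, i.e. \eqref{P_scalar} with $i=1$ for the positive function $u_1$. The symmetric substitution $u\equiv 0$ yields $(0,u_2)$. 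Thus both pairs are semi-trivial critical points of $J_\beta$ for every $\beta$, precisely because the coupling is a genuine product of the two unknowns.

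For (ii) I set $v=0$ in \eqref{functionalbeta}; the coupling integral vanishes, so $J_\beta(w_1,0)=J_1(w_1)=c_1$ independently of $\beta$, and likewise $J_\beta(0,w_2)=c_2$. Since $w_i$ lies on the Nehari manifold of $J_i$, namely $\|\nabla w_i\|_2^2-\lambda_i|w_i|_2^2=-|w_i|_p^p$, substitution into $J_i$ gives the identity $c_i=-\tfrac{p-2}{2p}|w_i|_p^p$, which is negative because $p>2$ and $w_i\not\equiv 0$. The only comparative point is $c_2\le c_1$. For this I would invoke the scalar analogue of Lemma \ref{projection}: on $L_i^-:=\{u:\|\nabla u\|_2^2-\lambda_i|u|_2^2<0\}$ the fiber $t\mapsto J_i(tu)$ attains a negative minimum at a unique $t_{i,u}>0$, giving $g_i(u):=J_i(t_{i,u}u)=-\tfrac{p-2}{2p}(\lambda_i|u|_2^2-\|\nabla u\|_2^2)^{\frac{p}{p-2}}|u|_p^{-\frac{2p}{p-2}}$ with $c_i=\inf_{L_i^-}g_i$. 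Because $\lambda_2\ge\lambda_1$, one has $w_1\in L_1^-\subseteq L_2^-$ and $g_2(w_1)\le g_1(w_1)$ pointwise, whence $c_2\le g_2(w_1)\le g_1(w_1)=c_1$, using $t_{1,w_1}=1$ in the last equality.

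For (iii) and (iv) I would first observe that a semi-trivial critical point of $J_\beta$ has its nonzero component solving the scalar equation: if $(u,0)$ is critical, then by \eqref{J'} with $v=0$ the directions $(\varphi,0)$ force $-\Delta u=\lambda_1 u-|u|^{p-2}u$, while the $(0,\psi)$ directions are automatically satisfied. For (iii), a non-negative nontrivial such $u$ is strictly positive by the strong maximum principle, hence coincides with $w_1$ by the uniqueness of the positive solution of \eqref{P_scalar} (the classical logistic input cited from \cite{AT,BZ,LG,Ouy}); the case $u\equiv 0$ gives $(0,w_2)$. For (iv) neither sign nor normalisation is needed: every nonzero solution $u$ of the scalar equation lies on the Nehari manifold of $J_1$, so the identity from (ii) yields $J_\beta(u,0)=J_1(u)=-\tfrac{p-2}{2p}|u|_p^p<0$, and symmetrically for $(0,v)$.

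The step I expect to be the main obstacle is the comparison $c_2\le c_1$: it is the only assertion that is not a direct substitution, and it relies on the fibering/Nehari characterisation of $c_i$ together with the monotonicity of this level in $\lambda_i$. The uniqueness used in (iii) is nontrivial as well, but it is imported wholesale from the classical scalar logistic theory rather than reproved here.
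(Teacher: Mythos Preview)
Your proposal is correct. The paper itself does not supply a proof of this lemma; it simply prefaces the statement with ``We recall that by the well known results in the literature, \cite{AT, BZ, LG, Ouy} we have the following result'' and treats all four items as consequences of the classical scalar logistic theory. Your argument therefore fills in precisely what the paper outsources, and the reductions you make (substitution to decouple, Nehari identity for the sign of $c_i$, uniqueness of the positive scalar solution for (iii), and the same Nehari identity for (iv)) are the natural ones.

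One small simplification is available for the comparison $c_2\le c_1$, which you flag as the main obstacle. Since $J_i$ is coercive and bounded below on $H_0^1(\Omega)$, its global minimum is attained; replacing a minimizer by its absolute value shows the minimizer is non-negative, hence equals $w_i$ by the uniqueness you already invoke. Thus $c_i=\min_{H_0^1(\Omega)}J_i$. Because $\lambda_2\ge\lambda_1$ implies $J_2(u)=J_1(u)-\tfrac{1}{2}(\lambda_2-\lambda_1)|u|_2^2\le J_1(u)$ for every $u$, the inequality $c_2\le c_1$ follows immediately, without passing through the fibering map. Your Nehari argument is of course also valid, but this direct route is shorter.
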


\begin{lemma}\label{l:tcontinuous}
	By the continuity of $t_{u,v,\beta}$ in $\beta$, it yields   $|t_{w_1,w_2,\beta}-1|=o_\beta(1)$ as $\beta\to0$. Here, $o_\beta(1)$ is the infinitesimal as $\beta\to0$.
\end{lemma}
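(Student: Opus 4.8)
The plan is to exploit the explicit formula \eqref{tuv} for $t_{w_1,w_2,\beta}$ and observe that only its denominator depends on $\beta$, and does so continuously. First I would record the identity obtained by testing the scalar equation \eqref{P_scalar} for $w_i$ against $w_i$ itself, namely $\dint_\Omega |\nabla w_i|^2 - \lambda_i w_i^2 = -\dint_\Omega |w_i|^p$ for $i=1,2$ (here $w_i>0$, so $w_i^p=|w_i|^p$). Summing over $i$ gives
\[
-\dint_\Omega |\nabla w_1|^2 - \lambda_1 w_1^2 + |\nabla w_2|^2 - \lambda_2 w_2^2 = \dint_\Omega |w_1|^p + |w_2|^p > 0,
\]
which simultaneously shows that $(w_1,w_2)\in L^-$, so that $\hat t = t_{w_1,w_2,\beta}$ is well defined, and identifies the ($\beta$-independent) numerator of \eqref{tuv} with the positive constant $N:=\dint_\Omega |w_1|^p + |w_2|^p$.

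Next I would analyze the denominator
\[
D(\beta):=\dint_\Omega |w_1|^p + |w_2|^p - 2\beta \dint_\Omega |w_1|^{\frac p2}|w_2|^{\frac p2},
\]
which is an affine function of $\beta$. The cross integral $\dint_\Omega |w_1|^{p/2}|w_2|^{p/2}$ is finite: by Hölder's inequality it is bounded by $|w_1|_p^{p/2}|w_2|_p^{p/2}$, and $w_1,w_2\in H_0^1(\Omega)\hookrightarrow L^p(\Omega)$ since $2<p<2^*$. Hence $D$ is continuous in $\beta$ with $D(0)=N>0$, so there is a neighbourhood of $\beta=0$ on which $D(\beta)>0$, and therefore on which $t_{w_1,w_2,\beta}=(N/D(\beta))^{1/(p-2)}$ is genuinely defined by \eqref{tuv}.

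Finally, since $x\mapsto x^{1/(p-2)}$ is continuous on $(0,\infty)$ and $\beta\mapsto N/D(\beta)$ is continuous at $\beta=0$ with value $N/N=1$, the composition $\beta\mapsto t_{w_1,w_2,\beta}$ is continuous at $0$ and equals $1$ there; this yields $|t_{w_1,w_2,\beta}-1|=o_\beta(1)$ as $\beta\to0$, recovering in particular the relation $t_{w_1,w_2,0}=1$ already noted in the text. No genuine obstacle arises: the whole argument is a continuity statement for an elementary function of $\beta$, and the only point requiring a word of care is verifying that the denominator stays positive near $\beta=0$ so that the root in \eqref{tuv} remains well defined, which is immediate from $D(0)=N>0$ together with the continuity (indeed affineness) of $D$.
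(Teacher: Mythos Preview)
Your argument is correct and follows the same approach the paper implicitly relies on: the paper states this lemma without proof, treating it as an immediate consequence of the continuity of the explicit formula \eqref{tuv} together with the observation (recorded just before the lemma) that $t_{w_1,w_2,0}=1$. You have simply made this explicit by computing the numerator via the scalar identities for $w_1,w_2$ and noting that the denominator is affine in $\beta$ with positive value at $\beta=0$.
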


As a consequence, we obtain the next lemma.
\begin{lemma}\label{l:nontrivialenergy}
	There exists $\delta_0>0$ such that, if $|\beta|\leq\delta_0$, then \[m_\beta <\min\{c_1,c_2\}.\]
\end{lemma}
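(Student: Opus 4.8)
The plan is to test the variational characterization \eqref{m_beta} of $m_\beta$ against the \emph{decoupled} pair $(w_1,w_2)$, where $w_1,w_2$ are the scalar positive solutions of \eqref{P_scalar}, and to exploit the fact that at $\beta=0$ the energy splits as $J_0(w_1,w_2)=c_1+c_2$, which already lies strictly below $\min\{c_1,c_2\}$ precisely because $c_1<0$.

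First I would verify that $(w_1,w_2)\in L^-$, so that the projection of Lemma \ref{projection} applies. Multiplying $-\Delta w_i=\lambda_i w_i-w_i^{p-1}$ by $w_i$ and integrating gives $\int_\Omega |\nabla w_i|^2-\lambda_i w_i^2=-\int_\Omega w_i^p<0$ for $i=1,2$; summing yields $\int_\Omega|\nabla w_1|^2-\lambda_1 w_1^2+|\nabla w_2|^2-\lambda_2 w_2^2=-\int_\Omega (w_1^p+w_2^p)<0$. Hence $\hat t(w_1,w_2)\in\mathcal N_\beta$ is well defined for every $\beta<1$, and in particular $m_\beta\le J_\beta(\hat t(w_1,w_2))$.

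Next, using \eqref{eq:Jbetanehari} with $(u,v)=(w_1,w_2)$, set $g(\beta):=J_\beta(\hat t(w_1,w_2))$. The numerator $\big(\int_\Omega w_1^p+w_2^p\big)^{p/(p-2)}$ is independent of $\beta$, while the denominator $D(\beta):=\int_\Omega (w_1^p+w_2^p)-2\beta\int_\Omega w_1^{p/2}w_2^{p/2}$ is affine, hence continuous, in $\beta$ and stays strictly positive for $\beta<1$ (as observed before Lemma \ref{projection}). Thus $g$ is continuous near $\beta=0$; equivalently one may invoke the continuity of $t_{w_1,w_2,\beta}$ from Lemma \ref{l:tcontinuous}. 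At $\beta=0$ the system decouples and $t_{w_1,w_2,0}=1$, so $g(0)=J_0(w_1,w_2)=J_1(w_1)+J_2(w_2)=c_1+c_2$.

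Finally, since $c_1<0$ by Lemma \ref{eq:Pi}$(ii)$ and $c_2\le c_1$, we have $\min\{c_1,c_2\}=c_2$ and $g(0)=c_1+c_2<c_2=\min\{c_1,c_2\}$. By continuity of $g$ there is $\delta_0>0$ such that $g(\beta)<\min\{c_1,c_2\}$ whenever $|\beta|\le\delta_0$, whence $m_\beta\le g(\beta)<\min\{c_1,c_2\}$, as claimed. I expect no deep obstacle here: the only point requiring care is the \emph{strictness} of the final inequality, which rests exactly on $c_1<0$ (adding the second, energetically favorable, component strictly lowers the level) together with the positivity of $D(\beta)$ near $\beta=0$ guaranteeing continuity of the test-energy $g$.
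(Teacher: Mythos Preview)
Your proof is correct and follows essentially the same approach as the paper: test the Nehari level at the decoupled pair $(w_1,w_2)$, use continuity of $\beta\mapsto J_\beta(t_{w_1,w_2,\beta}(w_1,w_2))$ near $\beta=0$ (the paper cites Lemma~\ref{l:tcontinuous} for this, while you argue directly from the explicit formula), and conclude from $c_1+c_2<\min\{c_1,c_2\}$ since $c_1<0$. Your write-up is in fact slightly more detailed than the paper's, as you explicitly verify $(w_1,w_2)\in L^-$.
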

\begin{proof}
	In fact, by continuity of $ \beta \mapsto J_\beta$, Lemma \ref{l:tcontinuous} and $c_i<0$ for $i=1,2$.
	\begin{align}
		J_\beta(t_{w_1,w_2,\beta}w_1,t_{w_1,w_2,\beta}w_2)\nonumber&=J_0(t_{w_1,w_2,\beta}w_1,t_{w_1,w_2,\beta}w_2)+o_\beta(1)\nonumber\\
		&=J_0(w_1,w_2)+o_\beta(1)=J_1(w_1)+J_2(w_2)+o_\beta(1)\nonumber\\
		&=c_1+c_2+o_\beta(1)<\min\{c_1,c_2\}.\nonumber
	\end{align}
\end{proof}

\begin{proof}[\bf Proof of Proposition \ref{prop:SmallBeta}]
	Since $m_\beta$ is finite, there exists a minimizing sequence $(u_n,v_n)$ such that $J_\beta(u_n,v_n) \to m_\beta$ and by standard arguments we obtain a minimizer $(u_\beta,v_\beta)$, which is a vectorial solution, by Lemma \ref{l:nontrivialenergy}. Moreover, since $(|u_\beta|,|v_\beta|)$ is also a minimizer, without loss of generality we may assume $u_\beta \gvertneqq 0$ and $v_\beta \gvertneqq 0$.
\end{proof}

Now, when $\beta\in(0,1)$, we prove the following monotonicity result. 

\begin{lemma}\label{l:monotone}
	Assume $0<\beta_1,\beta_2<1$, then it holds that $m_{\beta_1},m_{\beta_2}<\infty$. If $\beta_1\leq\beta_2$, we get $m_{\beta_1}\geq m_{\beta_2}$. More precisely, the map $\beta \mapsto m_\beta$ is non-increasing. 
\end{lemma}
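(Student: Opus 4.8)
The plan is to exploit the variational characterization of $m_\beta$ established in Lemma \ref{projection}, specifically the formula \eqref{m_beta}, which expresses $m_\beta$ as an infimum over the fixed set $L^-$ that does not depend on $\beta$. This is the crucial structural observation: since the feasible set $L^-$ is the same for every $\beta\in(0,1)$, monotonicity of $m_\beta$ can be reduced to a pointwise comparison of the integrands for fixed $(u,v)$.

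First I would record finiteness. For $0<\beta<1$ the denominator in \eqref{m_beta}, namely $\int_\Omega |u|^p+|v|^p-2\beta|u|^{\frac{p}{2}}|v|^{\frac{p}{2}}$, is strictly positive (as noted just before Lemma \ref{projection}, using $2|u|^{\frac{p}{2}}|v|^{\frac{p}{2}}\le |u|^p+|v|^p$ together with $\beta<1$), and the set $L^-$ is nonempty because one may take $(u,v)=(w_1,0)$, which lies in $L^-$ since $\lambda_1>\lambda_1(\Omega)$ forces $\int_\Omega |\nabla w_1|^2-\lambda_1 w_1^2<0$. Hence the quantity inside the infimum is a well-defined negative real number for each admissible $(u,v)$, and $m_{\beta_1},m_{\beta_2}<\infty$ follow (indeed $m_\beta<0$).

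Next comes the monotonicity itself. Fix $(u,v)\in L^-$ and consider the function
\[
\beta \mapsto D_\beta(u,v):=\dint_{\Omega} |u|^p + |v|^p-2\beta |u|^{\frac{p}{2}}|v|^{\frac{p}{2}}.
\]
Since $|u|^{\frac{p}{2}}|v|^{\frac{p}{2}}\ge 0$, the map $\beta\mapsto D_\beta(u,v)$ is non-increasing in $\beta$; thus for $\beta_1\le\beta_2$ we have $D_{\beta_1}(u,v)\ge D_{\beta_2}(u,v)>0$. The numerator $\big(-\int_\Omega |\nabla u|^2-\lambda_1 u^2+|\nabla v|^2-\lambda_2 v^2\big)^{\frac{p}{p-2}}$ is positive and independent of $\beta$. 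Because $\frac{2}{p-2}>0$ and the prefactor $-\frac{p-2}{2p}$ is negative, raising the larger denominator $D_{\beta_1}$ to a positive power and dividing makes the fraction smaller in magnitude, hence the whole (negative) expression larger; that is,
\[
J_{\beta_1}(\hat t(u,v)) \;\ge\; J_{\beta_2}(\hat t(u,v)) \qquad\text{for every }(u,v)\in L^-.
\]
Taking the infimum over $(u,v)\in L^-$ on both sides, and using that $L^-$ is the common feasible set, yields $m_{\beta_1}\ge m_{\beta_2}$, which is exactly the claimed non-increasing behavior.

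I do not anticipate a serious obstacle here; the argument is essentially a monotone comparison under a fixed infimum. The only point requiring mild care is to confirm that $L^-$ is genuinely $\beta$-independent and nonempty so that the pointwise inequality passes cleanly to the infimum, and to keep track of the sign reversal caused by the negative prefactor $-\frac{p-2}{2p}$ when comparing the two fractions — a larger denominator makes the negative quantity \emph{larger}, not smaller. Once these signs are handled correctly the conclusion is immediate.
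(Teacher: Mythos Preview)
Your proposal is correct and follows essentially the same approach as the paper: both use the characterization \eqref{m_beta} of $m_\beta$ as an infimum over the $\beta$-independent set $L^-$, compare the denominators $D_{\beta_1}(u,v)\ge D_{\beta_2}(u,v)>0$ pointwise for fixed $(u,v)\in L^-$, and pass to the infimum. You supply slightly more detail on finiteness and the sign bookkeeping, but the argument is the same.
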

\begin{proof}
	By Lemma \ref{projection}, the number $m_\beta$ can be rewritten as
	\begin{align}
		m_\beta=\inf_{(u,v)\in L^-}-\dfrac{p-2}{2p}\dfrac{\left(-\dint_{\Omega}|\nabla u|^2-\lambda_1 u^2+|\nabla v|^2-\lambda_2 v^2\right)^{\frac{p}{p-2}}}{\left(\dint_{\Omega} |u|^p + |v|^p-2\beta |u|^{\frac{p}{2}}|v|^{\frac{p}{2}}\right)^\frac{2}{p-2}}.\nonumber
	\end{align}
	Let $\beta_1,\beta_2\in(0,1)$ with $\beta_1\leq\beta_2$.
	For any fixed $(u,v)\in L^-$, it is easy to verify that
	\begin{align}
	-\dfrac{\left(-\dint_{\Omega}|\nabla u|^2-\lambda_1 u^2+|\nabla v|^2-\lambda_2 v^2\right)^{\frac{p}{p-2}}}{\left(\dint_{\Omega} |u|^p + |v|^p-2\beta_1  |u|^{\frac{p}{2}}|v|^{\frac{p}{2}}\right)^\frac{2}{p-2}}\geq -\dfrac{\left(-\dint_{\Omega}|\nabla u|^2-\lambda_1 u^2+|\nabla v|^2-\lambda_2 v^2\right)^{\frac{p}{p-2}}}{\left(\dint_{\Omega} |u|^p + |v|^p-2\beta_2  |u|^{\frac{p}{2}}|v|^{\frac{p}{2}}\right)^\frac{2}{p-2}}.\nonumber
	\end{align}
	The result follows immediately.
\end{proof}

\begin{lemma}\label{positivesol}
	For any $\beta\in(0,1)$, $m_\beta<\min\{c_1,c_2\}$. In particular,   $m_\beta$ is attained by a positive vectorial solution $(u_\beta,v_\beta)$.
\end{lemma}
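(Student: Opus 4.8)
The plan is to prove the two assertions of Lemma~\ref{positivesol} separately, treating the strict inequality $m_\beta<\min\{c_1,c_2\}$ first and then the attainment. For the energy estimate, I would exploit the monotonicity established in Lemma~\ref{l:monotone} together with the small-$\beta$ result. Precisely, for any fixed $\beta\in(0,1)$, I would compare $m_\beta$ against $m_{\beta'}$ for a small $\beta'>0$ with $\beta'\leq\beta$ and $\beta'\leq\delta_0$. By Lemma~\ref{l:monotone} the map $\beta\mapsto m_\beta$ is non-increasing on $(0,1)$, so $m_\beta\leq m_{\beta'}$; and by Lemma~\ref{l:nontrivialenergy}, since $\beta'\leq\delta_0$, we have $m_{\beta'}<\min\{c_1,c_2\}$. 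Chaining these gives $m_\beta\leq m_{\beta'}<\min\{c_1,c_2\}$, which is the desired strict inequality. (A slightly more careful statement of Lemma~\ref{l:monotone} would give strict monotonicity, but the non-strict version already suffices here because the strictness comes from Lemma~\ref{l:nontrivialenergy}.)

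For the attainment, I would run the direct method on the Nehari manifold. Take a minimizing sequence $(u_n,v_n)\in\mathcal{N}_\beta$ with $J_\beta(u_n,v_n)\to m_\beta$. By Lemma~\ref{l:NehairBoundedness}, since $\beta<1$, the sequence is bounded in $\cH$, so up to a subsequence $(u_n,v_n)\rightharpoonup(u_\beta,v_\beta)$ weakly in $\cH$ and strongly in $L^p(\Omega)\times L^p(\Omega)$ by the compact Sobolev embedding (here $2<p<2^*$). The strong $L^p$ convergence controls both the nonlinear term $\int_\Omega |u|^p+|v|^p$ and the coupling term $\int_\Omega |u|^{p/2}|v|^{p/2}$, while weak lower semicontinuity of the gradient norm handles the quadratic leading part. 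Using the characterization \eqref{m_beta} of $m_\beta$ as an infimum over $L^-$ (equivalently, projecting the weak limit onto $\mathcal{N}_\beta$ via the map $\hat t$ of Lemma~\ref{projection}), I would show the limit $(u_\beta,v_\beta)$ lies in $L^-$ and achieves the infimum, hence is a critical point of $J_\beta$ constrained to $\mathcal{N}_\beta$, and therefore a genuine critical point of $J_\beta$, i.e. a weak solution of \eqref{LS}.

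The role of the strict inequality $m_\beta<\min\{c_1,c_2\}$ is exactly to rule out semi-trivial limits: by Lemma~\ref{eq:Pi}, the only semi-trivial non-negative critical points are $(w_1,0)$ and $(0,w_2)$ with energies $c_1$ and $c_2$, so a minimizer of energy strictly below $\min\{c_1,c_2\}$ cannot have a vanishing component, forcing $u_\beta\neq 0$ and $v_\beta\neq 0$. As in the proof of Proposition~\ref{prop:SmallBeta}, replacing $(u_\beta,v_\beta)$ by $(|u_\beta|,|v_\beta|)$ (which does not increase $J_\beta$ when $\beta>0$, since the coupling term $\int_\Omega |u|^{p/2}|v|^{p/2}$ only grows and enters with a negative sign) yields a non-negative minimizer, and the strong maximum principle applied to each component of the resulting system upgrades non-negativity to strict positivity, giving a positive vectorial solution.

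The main obstacle I anticipate is establishing that the weak limit does not escape to the boundary of $\mathcal{N}_\beta$, i.e. that the denominator $\int_\Omega |u_\beta|^p+|v_\beta|^p-2\beta|u_\beta|^{p/2}|v_\beta|^{p/2}$ stays bounded away from zero and the numerator $-\int_\Omega |\nabla u_\beta|^2-\lambda_1 u_\beta^2+\cdots$ stays positive, so that the projection $\hat t(u_\beta,v_\beta)$ is well-defined and the limit genuinely sits on the manifold rather than degenerating to $(0,0)$. This is where the energy gap $m_\beta<\min\{c_1,c_2\}<0$ does the essential work: it guarantees $m_\beta<0$ strictly, which combined with the representation \eqref{eq:Jbetanehari} prevents the minimizing sequence from collapsing, and the strong $L^p$ convergence guarantees the coupling term passes to the limit so that $(u_\beta,v_\beta)\in L^-\setminus\{(0,0)\}$.
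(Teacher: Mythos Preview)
Your proposal is correct and follows essentially the same approach as the paper: both use the monotonicity of $\beta\mapsto m_\beta$ (Lemma~\ref{l:monotone}) together with the small-$\beta$ estimate (Lemma~\ref{l:nontrivialenergy}) to obtain $m_\beta<\min\{c_1,c_2\}$, then invoke the argument of Proposition~\ref{prop:SmallBeta} for attainment and the maximum principle for positivity. The only cosmetic difference is that the paper compares directly against $m_0$ whereas you compare against $m_{\beta'}$ for some $0<\beta'\leq\delta_0$; your choice is arguably cleaner since Lemma~\ref{l:monotone} is stated for $\beta_1,\beta_2\in(0,1)$.
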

\begin{proof}
	By Lemma \ref{l:nontrivialenergy}, there exists a constant $\delta_0$ such that if $|\beta|\leq\delta_0$, the result holds. Otherwise, if $\beta\in(\delta_0,1)$, by Lemma \ref{l:monotone}, we know that
	\[
	m_\beta\leq m_{0}<\min\{c_1,c_2\}.\nonumber
	\]
	Following the same argument in the proof of Proposition \ref{prop:SmallBeta}, we guarantee the existence of a non-negative minimizer. Since $\beta >0$ we can apply the weak maximum principle and conclude that the solution is positive. 
\end{proof}

By the previous arguments, we get the asymptotic behavior of the ground state level. 

\begin{proposition}\label{prop:betato1}
	$m_\beta\to-\infty$ as $\beta\to 1^-$.
\end{proposition}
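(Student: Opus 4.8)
The plan is to exploit the variational characterization of $m_\beta$ furnished by Lemma \ref{projection},
\[
m_\beta=\inf_{(u,v)\in L^-}-\dfrac{p-2}{2p}\dfrac{\left(-\int_{\Omega}|\nabla u|^2-\lambda_1 u^2+|\nabla v|^2-\lambda_2 v^2\right)^{\frac{p}{p-2}}}{\left(\int_{\Omega} |u|^p + |v|^p-2\beta |u|^{\frac{p}{2}}|v|^{\frac{p}{2}}\right)^\frac{2}{p-2}},
\]
and to produce a single, $\beta$-independent, test pair $(u,v)\in L^-$ along which the displayed quantity diverges to $-\infty$ as $\beta\to 1^-$. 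Since $m_\beta$ is an infimum, bounding it above by the value at one well-chosen competitor is enough; there is no need to control a minimizing sequence or to argue about attainment here.

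The guiding observation is that the denominator degenerates exactly along pairs with $|u|=|v|$: letting $\beta\to 1^-$ with $(u,v)$ fixed gives $\int_\Omega |u|^p+|v|^p-2\beta|u|^{p/2}|v|^{p/2}\to\int_\Omega(|u|^{p/2}-|v|^{p/2})^2$, which is zero precisely when $|u|=|v|$ a.e. This singles out the choice $u=v$. Concretely, I would take $u=v=\phi_1$, where $\phi_1\in H_0^1(\Omega)$, $\phi_1>0$, is a principal Dirichlet eigenfunction, $-\Delta\phi_1=\lambda_1(\Omega)\phi_1$. The only point requiring care is to confirm that this pair actually lies in $L^-$, and here the hypothesis $\lambda_1,\lambda_2>\lambda_1(\Omega)$ is exactly what is needed: using $\int_\Omega|\nabla\phi_1|^2=\lambda_1(\Omega)\int_\Omega\phi_1^2$,
\[
\int_{\Omega}|\nabla\phi_1|^2-\lambda_1\phi_1^2+|\nabla\phi_1|^2-\lambda_2\phi_1^2
=\big(2\lambda_1(\Omega)-\lambda_1-\lambda_2\big)\int_{\Omega}\phi_1^2<0,
\]
so $(\phi_1,\phi_1)\in L^-$, and the two requirements (membership in $L^-$ and vanishing of the denominator) are simultaneously met.

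With this fixed competitor the numerator is the $\beta$-independent positive constant $\big((\lambda_1+\lambda_2-2\lambda_1(\Omega))\int_\Omega\phi_1^2\big)^{p/(p-2)}$, while the denominator equals $\big(2(1-\beta)\int_\Omega\phi_1^p\big)^{2/(p-2)}$, which is positive for $\beta<1$ and tends to $0^+$ as $\beta\to 1^-$. Hence
\[
m_\beta\le -\dfrac{p-2}{2p}\,\dfrac{\big((\lambda_1+\lambda_2-2\lambda_1(\Omega))\int_{\Omega}\phi_1^2\big)^{\frac{p}{p-2}}}{\big(2(1-\beta)\int_{\Omega}\phi_1^p\big)^{\frac{2}{p-2}}}\longrightarrow -\infty\qquad(\beta\to 1^-),
\]
which is the claim. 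I do not anticipate a genuine obstacle: the entire content is the identification of the degenerating direction $u=v=\phi_1$, and the compatibility of the constraint $(\phi_1,\phi_1)\in L^-$ with the blow-up of the fraction, both of which follow from the spectral gap $\lambda_i>\lambda_1(\Omega)$.
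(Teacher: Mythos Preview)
Your proof is correct and follows essentially the same approach as the paper: both plug the test pair $(\phi_1,\phi_1)\in L^-$ into the characterization \eqref{m_beta} and observe that the denominator $2(1-\beta)\int_\Omega\phi_1^p$ forces the bound to $-\infty$ as $\beta\to1^-$. Your write-up is in fact more careful with the exponents for general $p$; the paper's displayed inequality appears to be simplified with $p=4$ in mind.
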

\begin{proof}
	Notice that $(\phi_1,\phi_1)\in L^-$. Then, by \eqref{m_beta} we get
	\begin{align}
		m_\beta\leq-\dfrac{p-2}{4p}\frac{[\lambda_1+\lambda_2-2\lambda_1(\Omega)]^2|\phi_1|_2^{\frac{2p}{p-2}}}{(1-\beta)|\phi_1|_p^p} \to -\infty \quad \mbox{as} \quad \beta \to 1^-.\nonumber
	\end{align}
	The result follows immediately.
\end{proof}

\begin{proof}[Proof of Theorem \ref{t:beta(0,1)}]
Combining  Lemma \ref{positivesol} and Propositions \ref{prop:betato1} and  \ref{prop:SmallBeta}, the result follows.
\end{proof}

\section{A mountain-pass approach to the vectorial solutions to problem \ref{LS} with $\beta<0$ and $|\beta|$ large}

In what follows we study the existence of vectorial solutions in the case  of strong competition, that is,  $\beta<0$ and $|\beta|$ large. Inspired by the approach developed in \cite{AmbrosettiColorado2007} to define the mountain pass structure
and looking for a  non-negative vectorial solution, we define the following modified functional:
\begin{align}
	J_\beta^+(u,v)& =\frac{1}{2}\dint_{\Omega}|\nabla u|^2-\lambda_1|u^+|^2+|\nabla v|^2-\lambda_2|v^+|^2+\frac{1}{p}\dint_{\Omega} |u|^p + |v|^p  -\frac{2\beta}{p}\dint_{\Omega} |u|^{\frac{p}{2}}|v|^{\frac{p}{2}}.\nonumber
\end{align}
Notice that $J_\beta^+$ is $C^1$-functional associated to the system 
\begin{equation}\label{system+}
	\left\{
	\begin{array}{lr}
		-\Delta u =\lambda_1 u^+- |u|^{p-2}u+ \beta |u|^{\frac{p}{2}-2}u |v|{^{\frac{p}{2}-1}}v\mbox{ in }\Omega,\\
		-\Delta v =\lambda_2 v^+- |v|^{p-2}v+ \beta |u|^{\frac{p}{2}-1}u |v|{^{\frac{p}{2}-2}}v\mbox{ in }\Omega,\\
		u,v\in H_0^1(\Omega),
	\end{array}
	\right.
\end{equation}
and also that, in view of the Maximum Principle, any vectorial critical point $(u,v)$ of $J_\beta^+$ is a non-negative vectorial solution to Problem \eqref{LS}. 

\begin{remark}
	For the case $\lambda_1=\lambda_2>\lambda_2(\Omega)$, the existence of a sign-changing solution to problem (\ref{e:SignChanging}) was proved in \cite{CardosoFurtadoMaia2024}. From a biological point of view, the sign-changing solutions describe the segregation of two species in a strong competition.
\end{remark}

The following proposition proves that $(w_1,0)$ and $(0,w_2)$ are local minima to the functional $J_\beta^+$, giving us a mountain pass geometry to our functional.

\begin{proposition}\label{l:mountainpasslevel}
	Assume $\lambda_2\geq \lambda_1>\lambda_1(\Omega)$. For $N\geq2$, $ 2< p < \min\{4,2^*\}$, and for each $\beta < 0 $, there exist constants  $\varepsilon_\star, C_\star>0$  such that,  for $\varepsilon \in (0,\varepsilon_\star)$, it follows that  $J_\beta^+(w_1+\varepsilon e_1,\varepsilon e_2)\geq c_1+ C_\star\varepsilon^2$ and $J_\beta^+(\varepsilon e_1,w_2+\varepsilon e_2)\geq c_2+C_\star\varepsilon^2$ for any $(e_1,e_2)\in H_0^1(\Omega)\times H_0^1(\Omega)$ with $\|e_1\|^2+\|e_2\|^2=1$.
Moreover, if $N=3$ and $p=4$, there exist constants $\varepsilon_\star, C_\star>0$ and $\beta_\star > 0, $  such that  for $\varepsilon \in (0,\varepsilon_\star)$ and $\beta < -\beta_\star$, the same assertion holds. 
\end{proposition}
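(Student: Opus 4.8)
The plan is to show that $(w_1,0)$ (and, by the symmetric argument, $(0,w_2)$) is a \emph{quantitative strict local minimum} of $J_\beta^+$. Since $w_1>0$ solves \eqref{P_scalar}, one checks directly that $(w_1,0)$ is a critical point of $J_\beta^+$ (the second equation of \eqref{system+} is satisfied at $v=0$ because $p>2$), so the first-order term in the expansion along $(w_1+\varepsilon e_1,\varepsilon e_2)$ vanishes. Writing $J_\beta^+(u,v)=\Phi_1^+(u)+\Phi_2^+(v)+E(u,v)$ with $\Phi_i^+(w)=\frac12\int_\Omega|\nabla w|^2-\lambda_i(w^+)^2+\frac1p\int_\Omega|w|^p$ and $E(u,v)=-\frac{2\beta}{p}\int_\Omega|u|^{p/2}|v|^{p/2}$, I would expand each piece separately in $\varepsilon$ and control the three contributions.

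For the $u$-component, since $(u^+)^2\le u^2$ we have $\Phi_1^+(w_1+\varepsilon e_1)\ge J_1(w_1+\varepsilon e_1)$, and as $J_1\in C^2(H_0^1(\Omega))$ for $2<p<2^*$ its Taylor expansion gives $J_1(w_1+\varepsilon e_1)=c_1+\tfrac{\varepsilon^2}{2}Q_1(e_1)+o(\varepsilon^2)$ uniformly for $\|e_1\|\le1$, with $Q_1(e_1)=\int_\Omega|\nabla e_1|^2-\lambda_1 e_1^2+(p-1)w_1^{p-2}e_1^2$. The key point is that $Q_1$ is $H_0^1$-coercive: the linearized operator $L_1=-\Delta-\lambda_1+(p-1)w_1^{p-2}$ satisfies $L_1w_1=(p-2)w_1^{p-1}>0$ with $w_1>0$, so its principal eigenvalue is positive; since $w_1\in L^\infty$, one upgrades the resulting $L^2$-coercivity to $Q_1(e_1)\ge\kappa_1\|e_1\|^2$ by a convex-combination argument. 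Thus the $u$-direction is strictly convex.

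The $v$-component and the coupling are where the difficulty lies. Here $\Phi_2^+(\varepsilon e_2)=\tfrac{\varepsilon^2}{2}Q_2(e_2)+\tfrac{\varepsilon^p}{p}\|e_2\|_p^p$ with $Q_2(e_2)=\int_\Omega|\nabla e_2|^2-\lambda_2(e_2^+)^2$, and the \emph{main obstacle} is that $Q_2$ has negative directions because $\lambda_2>\lambda_1(\Omega)$ (e.g. $Q_2(\phi_1)<0$). This negative second variation must be compensated by the competitive coupling $E(w_1+\varepsilon e_1,\varepsilon e_2)=\tfrac{2|\beta|}{p}\varepsilon^{p/2}\int_\Omega|w_1+\varepsilon e_1|^{p/2}|e_2|^{p/2}\ge0$ (using $\beta<0$). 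The mechanism is a compactness fact I would isolate as a lemma: if $\|e_2\|=1$ and $g_0(e_2):=\int_\Omega w_1^{p/2}|e_2|^{p/2}\to0$, then, extracting a weak limit and using the compact embedding $H_0^1\hookrightarrow L^p\cap L^2$ together with $w_1>0$ a.e., the limit vanishes, forcing $\int_\Omega(e_2^+)^2\to0$ and hence $Q_2(e_2)\to1$. In other words, the only directions that make $Q_2$ negative are precisely those for which the coupling weight $g_0$ is bounded below.

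Finally I would combine the estimates, distinguishing the two regimes. For $2<p<4$ one has $p/2<2$, so the coupling out-scales $\varepsilon^2$; arguing by contradiction, if the bound failed along $\varepsilon_n\to0$ and unit vectors $(e_1^n,e_2^n)$, dividing by $\varepsilon_n^2$ makes the coupling coefficient $\varepsilon_n^{p/2-2}\to\infty$, which (since $Q_2$ is bounded below) forces $g_0(e_2^n)\to0$; by the compactness fact $Q_2(e_2^n)\to\|e_2^n\|^2$, and the surviving terms $\tfrac{\kappa_1}{2}\|e_1^n\|^2+\tfrac12\|e_2^n\|^2\ge\tfrac12\min(\kappa_1,1)$ contradict the vanishing left-hand side; this works for every $\beta<0$. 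For the critical case $p=4$, $N=3$ the coupling is itself of order $\varepsilon^2$, so scaling no longer helps and one instead exploits large $|\beta|$: the combined quadratic form $e_2\mapsto\int_\Omega|\nabla e_2|^2-\lambda_2(e_2^+)^2+|\beta|\int_\Omega w_1^2e_2^2$ becomes coercive once $|\beta|>\beta_\star$, because the principal eigenvalue of $-\Delta+|\beta|w_1^2$ exceeds $\lambda_2$ for $|\beta|$ large (as $w_1>0$ a.e.). The remaining technical point, which I expect to be the most delicate, is that the higher-order-in-$\varepsilon$ coupling corrections are also proportional to $|\beta|$; these are absorbed into the main $|\beta|$-proportional quadratic coupling term, so that $\varepsilon_\star$ and $C_\star$ can be chosen independently of $\beta<-\beta_\star$. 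The estimate for $(0,w_2)$ follows verbatim after exchanging the roles of the two components (and of $\lambda_1,\lambda_2$, $w_1,w_2$).
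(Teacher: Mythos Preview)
Your approach follows the paper's structure closely: decompose $J_\beta^+$, Taylor-expand around $(w_1,0)$, use the coercivity of $D^2J_1(w_1)$ in the $e_1$-direction (the paper cites \cite{OrugantiShiShivaji2002} for the positivity of the principal eigenvalue, you argue it directly from $L_1w_1=(p-2)w_1^{p-1}>0$), and compensate the negative directions of $Q_2$ by the competitive coupling. The two technical substitutions you make are both defensible and arguably cleaner.

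For $p=4$, the paper invokes the Li--Yau/CLR bound (Theorem~\ref{t:LiYau}) on the number of non-positive eigenvalues of $-\Delta+|\beta|w_1^2-\lambda_2$ to force $n(0)=0$ for $|\beta|$ large; your route via the principal eigenvalue of $-\Delta+|\beta|w_1^2$ tending to $+\infty$ (a short compactness argument using $w_1>0$ a.e.) is more elementary and gives the same conclusion. For $2<p<4$, the paper asserts without justification the pointwise bound $I_{\varepsilon,\beta}(e_2,e_2)\ge\gamma_2\,\varepsilon^{p/2}\|e_2\|^2$ with $\gamma_2$ independent of $\varepsilon$; in fact that inequality fails along any sequence $e_2^n\rightharpoonup 0$ with $\|e_2^n\|=1$ (then $g_0(e_2^n)\to 0$ while $Q_2(e_2^n)\to 1$, so $I_{\varepsilon,\beta}\sim\varepsilon^2\ll\varepsilon^{p/2}$). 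Your contradiction/compactness argument bypasses this and yields the correct $\varepsilon^2$-scale lower bound stated in the proposition, so here your version is actually more careful than the paper's.

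The point you flag as ``most delicate'' --- that the higher-order coupling remainders for $p=4$ carry a factor $|\beta|$ and must be absorbed uniformly in $\beta<-\beta_\star$ --- is real, and the paper glosses over it as well (its error term $O(\varepsilon^{3})|\beta|$ is simply declared absorbable ``taking $\varepsilon_\star$ smaller''). Your own compactness mechanism handles it: run the contradiction argument allowing also $|\beta_n|\to\infty$; since $\varepsilon_n e_1^n\to 0$ in $L^4$ one still gets $\int(w_1+\varepsilon_n e_1^n)^2(e_2^n)^2\to\int w_1^2 e_2^2$, and $|\beta_n|\to\infty$ then forces $e_2=0$, giving the same contradiction.
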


In order to  prove this proposition, we are going to use the results in \cite{LiYau1983} and state the following theorem.
\begin{theorem}\label{t:LiYau}
	Let $N\geq3$, $V\in L^{\frac{N}{2}}(\Omega)$, and $n(0)$ denotes the number of non-positive eigenvalues of the operator $-\Delta+V(x)$. Then there exists a constant $C(N)>0$  such that
	\begin{align}\label{Ineq:LiYau}
		n(0)\leq C(N)\dint_\Omega|V_-|^\frac{N}{2}.
	\end{align}
\end{theorem}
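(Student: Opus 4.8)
The plan is to prove \eqref{Ineq:LiYau} by the Birman--Schwinger principle combined with a Schatten-class bound on the resolvent; this is the standard route to Cwikel--Lieb--Rozenblum inequalities, of which \eqref{Ineq:LiYau} is an instance. First I would reduce to a sign-definite potential on all of space. Splitting $V=V_+-V_-$ with $V_\pm\ge0$, the form inequality $-\Delta+V\ge-\Delta-V_-$ and the min--max principle show that the count $n(0)$ for $-\Delta+V$ is at most that for $-\Delta-W$, where $W:=V_-\ge0$ and $W\in L^{N/2}(\Omega)$. Next, extending functions in $H_0^1(\Omega)$ by zero embeds $H_0^1(\Omega)$ into $H^1(\R^N)$ without changing the quadratic form $\int|\nabla u|^2-Wu^2$, so enlarging the form domain only increases the min--max counting function; it therefore suffices to prove the bound for $-\Delta-W$ on $\R^N$, where Fourier analysis is available. (A standard limiting argument absorbs a possible eigenvalue exactly at the threshold.)

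On $\R^N$ I would apply the Birman--Schwinger principle: for $e>0$ the number of eigenvalues of $-\Delta-W$ below $-e$ equals the number of eigenvalues $\ge 1$ of the compact self-adjoint operator $K_e:=W^{1/2}(-\Delta+e)^{-1}W^{1/2}$, and letting $e\downarrow 0$ gives $n(0)=\#\{j:\mu_j(K)\ge 1\}$ for $K:=W^{1/2}(-\Delta)^{-1}W^{1/2}$. Since $N\ge 3$ forces $N/2>1$, the Chebyshev inequality in the spectral counting yields
\[
n(0)\le\#\{j:\mu_j(K)\ge1\}\le\sum_j\mu_j(K)^{N/2}=\mathrm{Tr}\big(K^{N/2}\big)=\big\|W^{1/2}(-\Delta)^{-1/2}\big\|_{\mathcal{S}_N}^{N},
\]
where $\mathcal{S}_N$ is the Schatten class and I used that $K$ and $W^{1/2}(-\Delta)^{-1/2}$ share the same nonzero singular values.

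The crux is the Schatten estimate $\big\|W^{1/2}(-\Delta)^{-1/2}\big\|_{\mathcal{S}_N}^{N}\le C(N)\int_{\R^N} W^{N/2}$. The naive Kato--Seiler--Simon bound $\|f(x)g(-i\nabla)\|_{\mathcal{S}_q}\le(2\pi)^{-N/q}\|f\|_{L^q}\|g\|_{L^q}$ cannot be applied directly with $f=W^{1/2}$, $g(\xi)=|\xi|^{-1}$, $q=N$, because $|\xi|^{-1}\notin L^N(\R^N)$. What saves the argument is that $|\xi|^{-1}\in L^{N,\infty}(\R^N)$ (weak $L^N$), so I would instead invoke Cwikel's inequality in its weak-type form, namely $\|f(x)g(-i\nabla)\|_{\mathcal{S}_q}\le C_q\|f\|_{L^q}\|g\|_{L^{q,\infty}}$ for $q>2$. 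Taking $q=N>2$ (which is exactly where the hypothesis $N\ge3$ is used) and $f=W^{1/2}$ gives
\[
\big\|W^{1/2}(-\Delta)^{-1/2}\big\|_{\mathcal{S}_N}\le C(N)\,\|W^{1/2}\|_{L^N}\,\big\||\xi|^{-1}\big\|_{L^{N,\infty}}=C(N)\Big(\int_{\R^N}W^{N/2}\Big)^{1/N},
\]
and raising to the $N$-th power and chaining with the previous display produces $n(0)\le C(N)\int_\Omega|V_-|^{N/2}$.

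The main obstacle is precisely this weak-Schatten step: one must control $\mathrm{Tr}(K^{N/2})$ when $N/2$ is not an even integer (so the trace cannot be evaluated by simply iterating the integral kernel), and the relevant symbol $|\xi|^{-1}$ lies only in the weak space $L^{N,\infty}$, so the elementary Kato--Seiler--Simon bound fails at the endpoint and genuine interpolation in trace ideals (Cwikel's theorem) is required. This is also the structural reason the statement demands $N\ge3$, since $q=N>2$ is exactly the range where Cwikel's inequality holds. An alternative, closer to the method of the cited work of Li and Yau, replaces the Birman--Schwinger steps by the heat-trace bound $n(0)\le\mathrm{Tr}\,e^{-t(-\Delta-W)}$, valid for every $t>0$ because $e^{-t\mu_j}\ge1$ whenever $\mu_j\le0$, together with an upper bound on the heat kernel of the Schrödinger semigroup; there the analogous difficulty is to convert the resulting exponential-in-$W$ trace estimate into the polynomial weight $\int W^{N/2}$, which is achieved by optimizing or averaging in the time parameter $t$.
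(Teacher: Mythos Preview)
Your argument is correct: the Birman--Schwinger reduction to the compact operator $K=W^{1/2}(-\Delta)^{-1}W^{1/2}$, the Chebyshev bound in the Schatten class, and Cwikel's weak-type estimate $\|f(x)g(-i\nabla)\|_{\mathcal{S}_q}\le C_q\|f\|_{L^q}\|g\|_{L^{q,\infty}}$ with $q=N>2$ combine to give exactly \eqref{Ineq:LiYau}. The monotonicity steps (replacing $V$ by $-V_-$ and enlarging $H_0^1(\Omega)$ to $H^1(\R^N)$ by zero-extension) are also handled correctly via the min--max principle.

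However, this is a genuinely different route from what the paper does. The paper does not prove the inequality from scratch; it simply invokes the results of Li and Yau, specifically their lower bound on the $k$-th eigenvalue $\mu_k$ of the auxiliary weighted problem $-\Delta\psi=\mu\,q(x)\psi$ with Dirichlet conditions (their Theorem~2), together with their Corollary~2, which identifies $n(0)$ with the number of eigenvalues of that auxiliary problem lying below $1$. The Li--Yau method is based on a heat-kernel / eigenvalue-sum estimate rather than on trace-ideal interpolation. Your approach is Cwikel's proof of the CLR inequality, which is operator-theoretic and transparently isolates where the hypothesis $N\ge3$ enters (the range $q>2$ in Cwikel's inequality); the Li--Yau route cited in the paper is shorter to state but hides the analysis inside the reference, and its underlying mechanism---polynomial eigenvalue lower bounds coming from heat-trace comparison---is closer in spirit to the alternative you sketch in your final paragraph than to the Birman--Schwinger argument you actually carry out.
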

\begin{proof}
	In \cite[Theorem 2]{LiYau1983} an lower bound for the $k^{th}-$eigenvalue for the auxiliary problem \[ \tag{AP} \begin{cases} -\Delta \psi (x) &= \mu q(x) \psi(x),  \  \mbox{in} \
		D\\
		\ \ \ \ \  \psi(x) &= 0, \ \ \mbox{on} \ \ \partial D.
	\end{cases}\]
	Then, in \cite[Corollary 2 (iv)-(v)]{LiYau1983} an upper bound for $n(0)$ is provided by an equivalence between the number of the eigenvalues less than one to problem $(AP)$ and  $n(0)$.
\end{proof}
%\textcolor{red}{AQUI!!!!!!!}

Now we are ready to prove Proposition \ref{l:mountainpasslevel}.

\begin{proof}[{\bf Proof of Proposition \ref{l:mountainpasslevel}}]
	We only check the case of
	$J_\beta^+(w_1+\varepsilon e_1,\varepsilon e_2)\geq c_1+ C_\star\eps^2$ for $\eps \in (0,\eps_\star)$ and $\beta < -\beta_\star$ and $C_\star>0$ to be found, since the other case is similar.
	Let us start with a direct computation. Notice that, using Taylor's expansion of the function $s\mapsto |s|^{\frac{p}{2}}$, it yields
	\begin{align}
		J^+_\beta(w_1+\varepsilon e_1,\varepsilon e_2)\geq& J_\beta(w_1+\varepsilon e_1,\varepsilon e_2)\nonumber=\frac{1}{2}\dint_{\Omega} |\nabla(w_1+\varepsilon e_1)|^2-\lambda_1|w_1+\varepsilon e_1|^2
		\nonumber\\
		&+\frac{1}{2}\dint_{\Omega}|\nabla(\varepsilon e_2)|^2-\lambda_2|\varepsilon e_2|^2  +\frac{1}{p} \dint_{\Omega} |w_1+\varepsilon e_1|^p +|\varepsilon e_2|^p 	\nonumber\\
		&-\frac{2}{p} \dint_{\Omega}\beta|w_1+\varepsilon e_1|^{\frac{p}{2}} |\varepsilon e_2|^{\frac{p}{2}}\nonumber\\
		\geq & J_1(w_1+\varepsilon e_1)+\frac{\varepsilon^2}{2}\dint_{\Omega}( |\nabla e_2|^2  -\lambda_2e_2^2)\nonumber\\
		&+\frac{2\varepsilon^\frac{p}{2}}{p}\dint_{\Omega}|\beta|\left[w_1^\frac{p}{2}+\frac{p}{2}\varepsilon w_1^{\frac{p}{2} -1} e_1 + O(\eps^2)\right] |e_2|^\frac{p}{2}\nonumber\\
		=&J_1(w_1+\varepsilon e_1)+I_{\varepsilon,\beta}(e_2,e_2) + O(\varepsilon^{\frac{p}{2}+1})|\beta|,\nonumber
	\end{align}
where, 
	\begin{align}\label{QUAD}
	I_{\eps,\beta}(e_2,e_2)&:=\frac{\varepsilon^2}{2}\dint_{\Omega}( |\nabla e_2|^2  -\lambda_2e_2^2)+\frac{2\varepsilon^\frac{p}{2}}{p}\dint_{\Omega}|\beta|w_1^\frac{p}{2} |e_2|^\frac{p}{2}.
\end{align}

	We estimate each part separately.	Firstly, using that $w_1$ solves Problem (\ref{P_scalar}) with $i=1$ and $ D^2 J_1(w_1)$ is the second order derivative of $J_1$ at $w_1$.
	\begin{align}
		J_1(w_1+\varepsilon e_1) &=\frac{1}{2}\dint_{\Omega}|\nabla w_1|^2-\lambda_1|w_1|^2+\frac{1}{p}\dint_{\Omega} |w_1|^p\nonumber+\varepsilon\dint_{\Omega}\nabla w_1\cdot\nabla e_1-\lambda_1 w_1 e_1\nonumber\\
		&\quad +\varepsilon\dint_{\Omega} w_1^{p-1} e_1 +\frac{\varepsilon^2}{2}\dint_{\Omega}(|\nabla e_1|^2-\lambda_1 e_1^2)+(p-1)\frac{\varepsilon^2}{2} \dint_{\Omega} w_1^{p-2} e_1^2+O(\varepsilon^3)\|e_1\|^2\nonumber\\
		&=J_1(w_1)+\frac{\varepsilon^2}{2} D^2 J_1(w_1)[e_1,e_1]+O(\varepsilon^3)\|e_1\|^2.\nonumber
	\end{align}
Following the ideas in \cite[Lemma 4.2]{AmbrosettiColorado2007} and since we are under the assumptions of \cite[Theorem 2.5]{OrugantiShiShivaji2002}, we have that $D^2 J_1(w_1)$ has a positive principal eigenvalue $\gamma_1>0$. Then, it follows that
	\begin{align}\label{Ineq:D2J1coericive}
		D^2J_1(w_1)[e_1,e_1]\geq \gamma_1\|e_1\|^2,
	\end{align}
which implies that
	\begin{align}\label{Ineq:I}
		J_1(w_1+ \varepsilon e_1)\geq J_1(w_1)+\dfrac{\varepsilon^2}{2} [\gamma_1+O(\varepsilon)]\|e_1\|^2.\end{align}
	
	Now, we study the term in \eqref{QUAD}, with respect to the value of $p$. If, $p\in (2,4)$, we observe that
	\begin{align}
			I_{\eps,\beta}(e_2,e_2)&=\frac{\varepsilon^2}{2}\dint_{\Omega}( |\nabla e_2|^2  -\lambda_2e_2^2)+\frac{2\varepsilon^\frac{p}{2}}{p}\dint_{\Omega}|\beta|w_1^\frac{p}{2} |e_2|^\frac{p}{2}\nonumber\\
		&=\varepsilon^{\frac{p}{2}}\dint_{\Omega}\left[\frac{2}{p}|\beta|w_1^\frac{p}{2}| e_2|^\frac{p}{2} + \frac{\varepsilon^{2-\frac{p}{2}}}{2}( |\nabla e_2|^2  -\lambda_2e_2^2)\right].
	\end{align}
	Hence, for each $\beta <0$, there exists a sufficiently small $\eps_\star = \eps_\star(\beta)>0$, such that 
	\begin{align}	\label{Ieps}
	I_{\eps,\beta}(e_2,e_2) \geq \gamma_2  \varepsilon^\frac{p}{2}\|e_2\|^{2},
		\end{align}
	 for some $\gamma_2 = \gamma_2(\beta)>0$ and  each $\eps \in (0, \eps_\star)$. Therefore, combining (\ref{Ineq:I}) and (\ref{Ieps}) and taking $\eps_\star$ smaller if necessary,

		\begin{align}\label{Ineq:MountainPass1}
		J^+_\beta(w_1+\varepsilon e_1,\varepsilon_2)&\geq J_1(w_1)+\varepsilon^{\frac{p}{2}} C_\star(\|e_1\|^2+\|e_2\|^{2})\nonumber\\
		&=J_\beta(w_1,0)+\varepsilon^{\frac{p}{2}} C_\star> c_1.
	\end{align}
	
	 On the other hand, if $p=4$, we observe that
	\begin{align}
		I_{\eps,\beta}(e_2,e_2)&= \frac{\varepsilon^2}{2}\dint_{\Omega}\left[|\nabla e_2|^2  +(|\beta|w_1^2   -\lambda_2)e_2^2\right].
		\end{align}
We aim to prove that the quadratic form on the right hand side	is positive definite  with respect to $e_2$.
	Recalling \eqref{QUAD}, we consider the operator
	\begin{align}\label{positoper}
		\cL_{ \beta} := - \Delta + V_{\beta}(x) =-\Delta+|\beta|w_1^2-\lambda_2, 
	\end{align}
where $V_{\beta}(x):= |\beta|w_1^2-\lambda_2$. In view of Theorem \ref{t:LiYau}, we guarantee the existence of $\beta_\star>0$ such that if $\beta < -\beta_\star$, then equation \eqref{Ineq:LiYau} with $V = V_{\beta}$ yields  the unique possible non-negative integer is $n(0)=0$.
	Therefore, the operator \eqref{positoper} is positive definite, and thus there exists $\gamma_2>0$ such that  
	\begin{align}\label{Ineq:II}
		I_{\varepsilon,\beta}(e_2,e_2)&\geq\frac{\varepsilon^2}{2}\gamma_2||e_2||^2,
	\end{align}
	for all $0<\varepsilon<\eps_\star$.
	Thus, combining (\ref{Ineq:I}) and (\ref{Ineq:II}) and taking $\eps_\star$ smaller, if necessary, there exist $C_\star>0$ and $\beta_\star>0$ such that for all $\varepsilon \in (0,\varepsilon_\star)$ and $\beta < -\beta_\star$, it follows that 
	\begin{align}\label{Ineq:MountainPass1}
		J^+_\beta(w_1+\varepsilon e_1,\varepsilon_2)&\geq J_1(w_1)+\varepsilon^2 C_\star(\|e_1\|^2+\|e_2\|^2)\nonumber\\
		&=J_\beta(w_1,0)+\varepsilon^2 C_\star> c_1.
	\end{align}
	Therefore, the proof is complete.
\end{proof}

We are going to apply the classical Mountain Pass Theorem \cite{RB1986}, in the following setting. Either for $\beta < 0 $, if $2< p <\min\{4,2^*\}$, or for $\ \beta<- \beta_\star$, if $p=4$, we  consider the functional $J_\beta^+$, the sets of paths
\[\Gamma_{\beta}=\{\gamma\in C([0,1],\cH) :\gamma(0)=(w_1,0),\,\gamma(1)=(0,w_2)\};\]
and the min-max level \[c_{\beta}=\inf_{\gamma\in \Gamma_{\beta}}\sup_{t\in[0,1]}J^+_\varepsilon(\gamma(t)).\]
In view of Proposition \ref{l:mountainpasslevel},  the level $c_{\beta}$ is well-defined and satisfies 
\begin{align}\label{eq:cbetalowerbound}
	c_\beta\geq c_1 + \eps_\star^2C_\star > c_1 \geq c_2.
\end{align}
Therefore, we are able to prove the existence of the vectorial solution  in the Theorem \ref{t:mountainpass}. The next two results are essential for achieving this purpose.

\begin{theorem}\label{t:mpexistence}
Under the assumptions of Proposition \ref{l:mountainpasslevel}, there exists  a non-negative vectorial solution $(u_\beta,v_\beta)$ to problem \eqref{LS}  such that $c_1+\delta_\star<J_\beta(u_\beta,v_\beta)<-\delta_\star$. 
\end{theorem}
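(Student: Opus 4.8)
The plan is to produce the solution $(u_\beta,v_\beta)$ as a mountain-pass critical point of the modified functional $J_\beta^+$ associated with the min-max level $c_\beta$ defined above. First I would verify the mountain-pass geometry: the two endpoints $(w_1,0)$ and $(0,w_2)$ are strict local minima of $J_\beta^+$ by Proposition~\ref{l:mountainpasslevel}, with values $c_1$ and $c_2$, and by \eqref{eq:cbetalowerbound} the min-max level satisfies $c_\beta \geq c_1 + \eps_\star^2 C_\star > c_1 \geq c_2$, so the level strictly exceeds the endpoint values and the Mountain Pass Theorem of \cite{RB1986} applies once a compactness condition is in hand. This already delivers a critical point $(u_\beta,v_\beta)$ of $J_\beta^+$ with $J_\beta^+(u_\beta,v_\beta) = c_\beta$, and since every vectorial critical point of $J_\beta^+$ is a non-negative vectorial solution to \eqref{LS} (by the Maximum Principle argument recorded just before Proposition~\ref{l:mountainpasslevel}), it will suffice to verify that the critical point is genuinely vectorial, i.e. that neither component vanishes.

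The heart of the proof is the Palais--Smale condition. I would take a sequence $(u_n,v_n)$ with $J_\beta^+(u_n,v_n)\to c_\beta$ and $(J_\beta^+)'(u_n,v_n)\to 0$, and first establish boundedness in $\cH$. The standard device is to combine $J_\beta^+(u_n,v_n) - \tfrac{1}{p}(J_\beta^+)'(u_n,v_n)(u_n,v_n)$; because $2<p$, the $p$-homogeneous terms cancel and one is left controlling the quadratic part, yielding an a priori bound on $\|(u_n,v_n)\|$ up to the lower-order $\lambda_i$ terms, in the same spirit as the Nehari bound of Lemma~\ref{l:NehairBoundedness}. Once bounded, pass to a weakly convergent subsequence $(u_n,v_n)\rh (u_\beta,v_\beta)$; the compact Sobolev embedding $H_0^1(\Omega)\hookrightarrow L^p(\Omega)$ for $2<p<2^*$ upgrades this to strong convergence in $L^p$, which lets the nonlinear terms pass to the limit, and testing the equation against $(u_n-u_\beta, v_n-v_\beta)$ then forces strong convergence in $\cH$. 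Thus $J_\beta^+$ satisfies $(PS)_{c_\beta}$ and the Mountain Pass Theorem yields the critical point.

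The main obstacle, and the step deserving the most care, is ruling out that the mountain-pass solution degenerates into a semi-trivial one. The two-sided energy bound is precisely the tool for this: the upper estimate $J_\beta(u_\beta,v_\beta) = c_\beta < -\delta_\star$ must be extracted from a good competitor path in $\Gamma_\beta$ (for instance a path through suitably rescaled profiles, exploiting that $c_\beta$ is an infimum over paths joining the two negative-energy wells), while the strict lower bound $c_\beta > c_1 + \delta_\star$ comes directly from \eqref{eq:cbetalowerbound} with $\delta_\star := \tfrac{1}{2}\eps_\star^2 C_\star$. Crucially, Lemma~\ref{eq:Pi} identifies $(w_1,0)$ and $(0,w_2)$ as the only semi-trivial non-negative solutions, with energies $c_1$ and $c_2$; since $c_\beta > c_1 \geq c_2$ strictly, the solution cannot coincide with either semi-trivial solution, and by Lemma~\ref{eq:Pi}(iv) it cannot be any other semi-trivial critical point either. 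Hence $u_\beta \neq 0$ and $v_\beta \neq 0$, so $(u_\beta,v_\beta)$ is a bona fide vectorial solution satisfying $c_1 + \delta_\star < J_\beta(u_\beta,v_\beta) < -\delta_\star$, completing the proof.
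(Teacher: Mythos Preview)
Your overall strategy coincides with the paper's: run the Mountain Pass Theorem on $J_\beta^+$ between the two local minima $(w_1,0)$ and $(0,w_2)$, use Proposition~\ref{l:mountainpasslevel} and \eqref{eq:cbetalowerbound} for the lower energy barrier, and exclude semi-trivial limits by comparing $c_\beta$ against $c_1,c_2$ via Lemma~\ref{eq:Pi}. In that sense the proposal is correct.

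The one place where your plan is genuinely less complete than the paper is the upper estimate $c_\beta<0$. You say only that it ``must be extracted from a good competitor path\dots\ through suitably rescaled profiles,'' without naming the path or the rescaling. The paper makes this explicit: it takes the linear interpolation $\gamma(s)=\big((1-s)w_1,\,sw_2\big)$, observes that $\gamma(s)\in L^-$ for every $s\in[0,1]$, and then projects to the Nehari set via Lemma~\ref{projection}, setting $\tilde\gamma(s)=t_{\gamma(s),\beta}\,\gamma(s)\in\cN_\beta$. The closed formula \eqref{eq:Jbetanehari} then shows $s\mapsto J_\beta^+(\tilde\gamma(s))$ is continuous and strictly negative on $[0,1]$, so its maximum is a negative number and hence $c_\beta<0$. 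This Nehari-projection step is the concrete mechanism behind your ``suitably rescaled profiles,'' and it is worth writing out, since a naive unscaled path need not stay below zero.

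Conversely, your treatment of the Palais--Smale condition is more detailed than the paper's, which simply invokes ``a classical routine as in \cite{RB1986}''; your idea of mimicking the Nehari bound of Lemma~\ref{l:NehairBoundedness} (now with an $o(\|(u_n,v_n)\|)$ error replacing the exact constraint) is the right way to make that routine explicit for $\beta<1$.
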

\begin{proof}
	Under our assumptions, for each $\beta <0$ fixed, we are going to apply the mountain pass theorem to the functional $J_\beta^+$. Consider the path 
	$
	\gamma(s) =    
	\big((1-s)w_1, sw_2\big)
	$, for  all $s\in [0,1].$
	Note that $\gamma(s)\in L^-$, for each $s\in [0,1]$. Hence, there exists $t_{\gamma(s),\beta}>0$ given by \eqref{projection}, such that, in view of \eqref{eq:Jbetanehari},  $\tilde \gamma (s):= t_{\gamma(s),\beta}\gamma(s)\in \cN_\beta$ satisfies
	\begin{align*}
		J_\beta^+\big(\tilde \gamma(s)\big) &= J_\beta\big(t_{\gamma(s),\beta} \big((1-s)w_1, sw_2\big)\big)\\ &=- \dfrac{p -2}{2p}\dfrac{\left(-\dint_{\Omega}|\nabla(1-s)w_1|^2-\lambda_1| (1-s)w_1|^2+|\nabla sw_2|^2-\lambda_2 |sw_2|^2\right)^\frac{p}{p-2}}{\left(\dint_{\Omega} |(1-s)w_1|^p+|sw_2|^p-2\beta |(1-s)w_1|^{\frac{p}{2}}|sw_2|^{\frac{p}{2}}\right)^{\frac{2}{p-2}}}.
	\end{align*}
	Observe that, $s\mapsto J_\beta^+(\tilde \gamma(s))$ is a continuous map in the variable $s\in [0,1]$, which is strictly negative, and so its maximum is attained on a negative value. Since $\tilde \gamma(s) \in \Gamma_\beta$, it follows that $c_\beta <0$. By a classical routine as in \cite{RB1986}, we prove that $c_{\beta}$ is a critical value and admits a critical point $(u_\beta,v_\beta)\neq (0,0)$, such that $u_\beta \geq 0$ and $v_\beta\geq0$, since we are working with $J_\beta^+$.
	Notice that, in view of Lemma \ref{l:mountainpasslevel}, the mountain pass geometry yields that $c_{\beta}\geq c_1+O(\varepsilon^2)>c_2$, for all  $\beta<-\beta_\star$ and $\varepsilon \in (0,\varepsilon_\star)$.
	Hence, the solution $(u_\beta,v_\beta)$ cannot be semi-trivial. Otherwise, using the uniqueness of $w_i$ as the positive solution to the scalar problem \eqref{eq:Pi}, for $i=1,2$, it would mean that $(u_\beta, v_\beta)=(w_1, 0)$ or $(u_\beta,v_\beta)=(0,w_2)$. In such cases, we get $J^+_\beta(u_\beta,v_\beta)=c_1$ or $J^+_\beta(u_\beta,v_\beta)=c_2$, respectively. Leading to a contradiction, since $J_\beta(u_\beta, v_\beta) = c_\beta > c_1\geq c_2.$
	Therefore, $u_\beta \gneq 0$ and $v_\beta \gneq 0.$
\end{proof}
\begin{remark}
	We notice that, if $\tilde \Omega=\{x \in \Omega : u_\beta(x)=0\}$, then, the maximum principle yields $v_\beta >0$ in $\tilde \Omega.$ This means that, there is no  subdomains in $\Omega$ where both of $u_\beta$ and $v_\beta$ are trivial simultaneously. 
\end{remark}

For the classical case $N=3$ and $p=4$, in view of the proof of Proposition \ref{l:mountainpasslevel},  if $\beta< -\beta_\star$ then the operator $\cL_\beta$ defined in \eqref{positoper} is positive definite. On the other hand,  since $\lambda_2 > \lambda_1(\Omega)$,  we know that the operator $\cL_\beta$ is indefinite, for small $|\beta|$. Hence, we may assume  that for $\beta \in [-\beta_\star,0]$ there exists a finite number $n(0)$ of  non-positive eigenvalues, see Theorem \ref{t:LiYau}. This suggests that it might be possible to construct a linking structure, in order to find a vectorial solution to problem \eqref{LS}, when $\beta \in [-\beta_\star,0]$.
%\[Q_\beta(u,v) :=\dint_{\Omega}|\nabla u|^2-\lambda_1 u^2+\dint_{\Omega}|\nabla v|^2-\lambda_2 v^2 + \frac{|\beta|}{2}\int_\Omega u^2v^2 , \quad (u,v) \in \cH. \]
%\textcolor{red}{ Linking Structure: Construir $Q_\beta$ indefinido na primeira coordenada, com $\cH^- =  \{0\}\times E_2^-$ com as autofunçoes associadas aos autovalores nao positivos acima. Construir o linking escolhendo  $e$ como a projecao de  $(-w_1,w_2)$ em $\cH^+$.} 

We now prove the segregation of the mountain-pass solution $(u_\beta,v_\beta)$.

\begin{theorem}\label{t:segregation}
	Under the hypotheses of Proposition \ref{l:mountainpasslevel}, let   $(\beta_n)$ be a sequence of negative numbers, satisfying $\beta_n \to - \infty$. The sequence of non-negative solutions   $(u_{\beta_n}, v_{\beta_n})$, given by Theorem \ref{t:mpexistence}  satisfies, up to a subsequence,
	\begin{itemize}
		\item[(i)]  $(u_{\beta_n}, v_{\beta_n}) \to (u_\infty,v_\infty)$ in $C^{0,\alpha}(\Omega)\cap H_0^1(\Omega)$, for all $\alpha \in (0,1);$
		\item[(ii)] $u_\infty \cdot v_\infty \equiv 0$ in $\Omega$ and $\int_\Omega \beta u_{\beta_n}^\frac{p}{2} v_{\beta_n}^\frac{p}{2} \to 0$, as $\beta_n \to -\infty;$
		\item[(iii)] the limit function $(u_\infty, v_\infty)$ satisfies the uncoupled system 
		\begin{equation}\label{system2}
			\left\{
			\begin{array}{lr}
				-\Delta u =\lambda_1 u- u^{p-1}\mbox{ in }\{x\in\Omega \, : \,u>0\},\\
				-\Delta v =\lambda_2 v- v^{p-1} \mbox{ in }\{x\in\Omega \, : \,v>0\},\\
				u,v\in H_0^1(\Omega).
			\end{array}
			\right.
		\end{equation}
	\end{itemize}
	Furthermore, $u_\infty$ and $v_\infty$ are both nontrivial. 
\end{theorem}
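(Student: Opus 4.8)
The plan is to establish a uniform bound on the solution sequence, extract convergent subsequences, pass to the limit in the equations, and then prove segregation together with nontriviality. First I would show that the sequence $(u_{\beta_n}, v_{\beta_n})$ is bounded in $\cH$ uniformly in $n$. Since each $(u_{\beta_n},v_{\beta_n}) \in \cN_{\beta_n}$ and $\beta_n < 0$, Lemma \ref{l:NehairBoundedness} gives a uniform bound $\|(u_{\beta_n},v_{\beta_n})\| \leq C_0$ independent of $\beta$ (this is exactly the uniform case $\beta \leq 0$ in that lemma). Next, since $J_{\beta_n}(u_{\beta_n},v_{\beta_n}) = c_{\beta_n}$ with $c_{\beta_n} < 0$ bounded below (by \eqref{eq:cbetalowerbound}, $c_{\beta_n} > c_1$), and using the Nehari identity, I would extract the coupling term: writing $J_{\beta_n}$ on $\cN_{\beta_n}$ as $\frac{p-2}{2p}$ times the quadratic part, the boundedness of the energy forces control on $-\frac{2\beta_n}{p}\int_\Omega u_{\beta_n}^{p/2} v_{\beta_n}^{p/2}$. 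Because $-\beta_n \to +\infty$ while the energy stays bounded, this yields $\int_\Omega \beta_n u_{\beta_n}^{p/2} v_{\beta_n}^{p/2} \to 0$, proving the integral claim in (ii).

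Up to a subsequence, $u_{\beta_n} \rightharpoonup u_\infty$ and $v_{\beta_n} \rightharpoonup v_\infty$ weakly in $H_0^1(\Omega)$, with strong convergence in $L^q(\Omega)$ for $q < 2^*$ and a.e.\ convergence. The convergence in $C^{0,\alpha}(\Omega)$ claimed in (i) is the part requiring the external machinery: here I would invoke the uniform Hölder estimates for strongly competing systems from \cite{NorisTavarseTerraciniVerzini2010} (and the regularity framework of \cite{STTZ}), which the paper has flagged for precisely this purpose. These results provide uniform $L^\infty$ and then $C^{0,\alpha}$ bounds for $(u_{\beta_n}, v_{\beta_n})$ independent of $\beta_n$, upgrading weak $H_0^1$ convergence to strong $H_0^1$ and $C^{0,\alpha}$ convergence. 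For the segregation $u_\infty \cdot v_\infty \equiv 0$, I would use Fatou's lemma on $|\beta_n| \int_\Omega u_{\beta_n}^{p/2} v_{\beta_n}^{p/2}$: since this product stays bounded while $|\beta_n| \to \infty$, the integrand $\int_\Omega u_{\beta_n}^{p/2} v_{\beta_n}^{p/2} \to 0$, and by a.e.\ convergence $u_\infty^{p/2} v_\infty^{p/2} = 0$ a.e., hence $u_\infty v_\infty \equiv 0$.

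For part (iii), fix a test function $\varphi$ supported in the open set $\{u_\infty > 0\}$. By the $C^{0,\alpha}$ convergence, $u_{\beta_n} > 0$ and (by segregation, since $u_\infty v_\infty = 0$) $v_{\beta_n} \to 0$ uniformly on $\supp \varphi$ for $n$ large, so the coupling term $\beta_n \int_\Omega u_{\beta_n}^{p/2-1} v_{\beta_n}^{p/2} \varphi$ vanishes in the limit; passing to the limit in the weak formulation of \eqref{system+} recovers $-\Delta u_\infty = \lambda_1 u_\infty - u_\infty^{p-1}$ on $\{u_\infty > 0\}$, and symmetrically for $v_\infty$. The main obstacle, and the step I expect to be hardest, is proving that \emph{both} $u_\infty$ and $v_\infty$ are nontrivial. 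The energy lower bound $c_{\beta_n} > c_1 \geq c_2$ rules out the limit being a single semi-trivial solution by an energy comparison: if, say, $v_\infty \equiv 0$, then strong convergence would force $u_{\beta_n} \to u_\infty$ with $u_\infty$ solving the scalar problem, so by uniqueness $u_\infty = w_1$ and $J_{\beta_n}(u_{\beta_n}, v_{\beta_n}) \to c_1$, contradicting $c_{\beta_n} > c_1 + \eps_\star^2 C_\star$. The delicate point is making this energy-gap argument rigorous through the limit, using lower semicontinuity of the energy together with the vanishing of the coupling term to guarantee no energy escapes and that the strict inequality \eqref{eq:cbetalowerbound} survives in the limit.
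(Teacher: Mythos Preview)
Your overall plan mirrors the paper's proof closely: uniform $\cH$-bound from Lemma \ref{l:NehairBoundedness}, invocation of the Hölder-bound machinery in \cite{STTZ,NorisTavarseTerraciniVerzini2010} to get (i)--(iii), and an energy-gap argument using \eqref{eq:cbetalowerbound} for nontriviality. There is, however, one genuine gap.

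You write that the cited results ``provide uniform $L^\infty$ and then $C^{0,\alpha}$ bounds''. That is backwards: the theorems in \cite{NorisTavarseTerraciniVerzini2010} and \cite{STTZ} take a uniform $L^\infty$ bound as a \emph{hypothesis} and then deliver the uniform $C^{0,\alpha}$ estimate and the segregation statements. You therefore need an independent argument producing $|u_{\beta_n}|_\infty, |v_{\beta_n}|_\infty \leq C$ uniformly in $n$ before you can invoke them. The paper supplies this via a structural observation you have not used: since $\beta_n<0$ and $u_{\beta_n},v_{\beta_n}\geq 0$, each component satisfies the linear differential inequality
\[
-\Delta u_{\beta_n} = \lambda_1 u_{\beta_n} - u_{\beta_n}^{p-1} + \beta_n u_{\beta_n}^{p/2-1} v_{\beta_n}^{p/2} \leq \lambda_1 u_{\beta_n},
\]
and similarly for $v_{\beta_n}$. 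Feeding this into \cite[Theorem 8.16]{GilbargTrudinger2001} (with right-hand side controlled by $\|u_{\beta_n}\|$, which is uniformly bounded by $C_0$) yields the required uniform $L^\infty$ bound. Without this step your appeal to the external Hölder estimates is not justified.

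A smaller point: your Nehari computation shows only that $|\beta_n|\int u_{\beta_n}^{p/2} v_{\beta_n}^{p/2}$ is \emph{bounded}, hence $\int u_{\beta_n}^{p/2} v_{\beta_n}^{p/2}\to 0$; it does not by itself give $\beta_n\int u_{\beta_n}^{p/2} v_{\beta_n}^{p/2}\to 0$. The paper does not attempt this directly either---the full statement of (ii), along with (i) and (iii), is obtained wholesale from \cite[Theorems 1.2, 1.5]{STTZ} once the $L^\infty$ bound is in hand. Your separate arguments for segregation via Fatou and for (iii) via test functions on $\{u_\infty>0\}$ are fine as a sketch of what those theorems contain, but they are not needed as independent steps. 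Finally, in the nontriviality argument do not forget to rule out $(u_\infty,v_\infty)=(0,0)$ as well; here the upper bound $c_{\beta_n}\leq -\delta_\star$ is what does the work, giving $J_1(u_\infty)+J_2(v_\infty)\leq -\delta_\star<0$.
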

\begin{proof}

	Using Lemma \ref{l:NehairBoundedness}, there exists a constant $C_0$ such that for any $\beta<0$ we get $\|(u_\beta,v_\beta)\|\leq C_0$. Then, there exists a $(u_\infty,v_\infty)\in H_0^1(\Omega)\times H_0^1(\Omega)$ such that
	\begin{align}(u_\beta,v_\beta)\rightharpoonup&(u_\infty,v_\infty)\mbox{ in }H_0^1(\Omega)\times H_0^1(\Omega);\nonumber\\(u_\beta,v_\beta)\to&(u_\infty,v_\infty)\mbox{ in }L^r(\Omega)\times L^r(\Omega)\mbox{ for any }r\in[1,2^*);\nonumber\\(u_\beta(x),v_\beta(x))\to&(u_\infty(x),v_\infty(x)), \ x \mbox{ a.e. in }\Omega.\nonumber
	\end{align}
	Notice that $(u_\beta,v_\beta)$ satisfy that
	\begin{equation}
		\left\{
		\begin{array}{lr}
			-\Delta u \leq\lambda_1 u\mbox{ in }\Omega,\nonumber\\
			-\Delta v \leq\lambda_2 v\mbox{ in }\Omega,\nonumber\\
			0\leq u(x),v(x)\in H_0^1(\Omega).\nonumber
		\end{array}
		\right.
	\end{equation}
	Applying \cite[Theorem 8.16]{GilbargTrudinger2001} for each  equation above with $f = 0$ and $g_1 = \lambda_1 u$ and $g_2 = \lambda_2 v$, respectively, there exists a constant $\kappa>0$, such that
	\[  |u_{\beta_n}|_\infty \leq \kappa \|g_1\|_{\tfrac{q}{2}} ,|v_{\beta_n}|_\infty\leq \kappa \|g_2\|_{\tfrac{q}{2}}, \mbox{ with } \tfrac{q}{2} \in (\tfrac{N}{2},  2^*).\]
	Due to the Sobolev embeddings, and the uniform bound $ \|(u_{\beta_n},v_{\beta_n})\|\leq C_0$, we get a  uniform constant $C$, independent of ${\beta_n}$, such that
	\begin{align}
		|u_{\beta_n}|_\infty,|v_{\beta_n}|_\infty\leq C,\nonumber
	\end{align}
	for all $n \in \N$.
	In view of \cite[Theorem 1.2 and 1.5]{STTZ} we derive $(i)-(iii)$, for the classical case $N=3$ and $p=4$, see also \cite[Theorem 1.1 and 1.2]{NorisTavarseTerraciniVerzini2010}.
	Note that, item $(ii)$  yields
	\begin{align}
		\lim_{n\to\infty}J_\beta(u_{\beta_n},v_{\beta_n})=J_1(u_\infty)+J_2(v_\infty).\nonumber
	\end{align}
	Since, from Theorem \ref{t:mpexistence} we get $J_{\beta_n}(u_{\beta_n},v_{\beta_n})\in[c_1+\delta_\star,-\delta_\star]$, and $\delta_\star$ is independent of $\beta_n$, we know that
	$J_1(u_\infty)+J_2(v_\infty)\in[c_1+\delta_\star,-\delta_\star]$.
	
	We prove now that both of $\{x\in\Omega \, : \,u_\infty>0\}$ and $\{x\in\Omega \, : \,v_\infty>0\}$ are non-empty. In fact, if  $\{x\in\Omega \, : \,u_\infty>0\}=\{x\in\Omega \, : \, v_\infty>0\}=\emptyset$, then $(u_\infty,v_\infty)=(0,0)$, which means that $0 = J_1(u_\infty)+J_2(v_\infty)\leq -\delta_\star<0$, leading to a contradiction. On the other hand, if $\{x\in\Omega \, : \,u_\infty>0\}=\emptyset$ and $\{x\in\Omega \, : \,v_\infty>0\}\neq \emptyset$ , we conclude that $(u_\infty,v_\infty)=(0,w_2)$ by the uniqueness of a positive solution given by Lemma \ref{eq:Pi}. In this case, 
	$ c_2 = J_2(v_\infty) = J_1(u_\infty)+J_2(v_\infty) \geq c_1 + \delta_\star > c_2,$ leading again to 
	a contradiction. The contradiction obtained in case $\{x\in\Omega \, : \,u_\infty>0\}\neq \emptyset$ and $\{x\in\Omega \, : \,v_\infty>0\}= \emptyset$
	is analogous.
	Therefore, $\{x\in\Omega \, : \,u_\infty>0\}\neq\emptyset$ and $\{x\in\Omega \, : \,v_\infty>0\}\neq \emptyset$, which means that each one of $u_\infty$ and $v_\infty$ is nontrivial. 
\end{proof}

\begin{proof}[Proof of Theorem \ref{t:mountainpass}]
	Combining the results in Theorems \ref{t:mpexistence} and \ref{t:segregation}, in order to conclude the proof, it is sufficient to show that $(u_\infty, v_\infty) = (w_\infty^+,w_\infty^-)$, where $w_\infty = w_\infty^+ - w_\infty^-$ is a sign-changing solution  to the problem \eqref{e:SignChanging}. To do so, we have to prove that $\overline \Omega = \overline{\{x\in\Omega \, : \, u_\infty>0\}} \cup \overline{\{x\in\Omega \, : \,v_\infty>0\}}$. Indeed, since the boundary of $\Omega$ is regular, the argument follows the lines of the proof of \cite[Theorem 1.3]{CardosoFurtadoMaia2024} and its core is to apply the Hopf Lemma. 
\end{proof}

\section{Vectorial Solutions to the Cooperative Problem with $\beta\geq1$}

We begin with non-existence result to  problem \eqref{LS} with $\beta\geq1$. To be more precise, we prove that
problem \eqref{LS} does not admit a positive vectorial solutions, that is, a solution $(u,v)\in \cH$ such that $u >0$ and $ v >0$.

\begin{proof}[Proof of Theorem \ref{t:nonexistencebeta>0}]
Using the an approach as those developed in \cite[pp. 1865]{CZ2021}, we argue by contradiction. Assume that there exists a vectorial positive solution $(u,v)$ to problem \eqref{LS}. Multiplying the first equation by $v$ and the second one by $u$, integrating by parts on $\Omega^\varepsilon_+=\{x\in\Omega \, : \, u(x)>v(x)+\varepsilon\}$ for a fixed small $\varepsilon>0$ and subtracting the first one from the second one, we get
	\begin{align}\label{eq:2-1}
		\dint_{\partial\Omega^\varepsilon_+}\Big(v\frac{\partial u}{\partial n} -u\frac{\partial v}{\partial n}\Big)dS
		=(\lambda_2-\lambda_1)\dint_{\Omega^\varepsilon_+} uvdx +(\beta+1)\dint_{\Omega_+^\varepsilon}(u^{p-2}-v^{p-2})uvdx.
	\end{align}
	Let $\tilde v = v + \eps$ and $\Omega^0_+=\{x\in\Omega\, : \, u(x)>v(x)\}$, in view of \eqref{eq:2-1} and applying Hopf's lemma, we have
	\begin{align*}
		0\geq \dint_{\partial\Omega^\varepsilon_+}u\frac{\partial (u-\tilde v)}{\partial n}dS
		=(\lambda_2-\lambda_1)\dint_{\Omega^\varepsilon_+} u\tilde vdx +(\beta+1)\dint_{\Omega_+^\varepsilon}(u^{p-2}-\tilde v^{p-2})u\tilde v dx + O(\eps^{p-2}).
	\end{align*}
	Recalling that $\lambda_2 \geq \lambda_1$ and $\beta \geq1$, taking the limit as $\eps \to 0^+$, we obtain
	\begin{align}
		0>2\dint_{\Omega_+^\varepsilon}(u^{p-2}-\tilde v^{p-2})u\tilde v dx + O(\eps^{p-2})\to2\dint_{\Omega^0_+}(u^{p-2}- v^{p-2})uv dx\leq0.\nonumber
	\end{align}
	This implies that $\Omega^0_+= \{x\in\Omega \,:\, u(x)\geq v(x)\} = \{x\in\Omega \,:\, u(x)=v(x)\}$, and then $\{x\in\Omega \,:\, u(x)>v(x)\}=\emptyset$. Thereby, $v\geq u>0$ in $\Omega$. In addition, multiplying the first equation by $\phi_1$, the principal eigenfunction of $-\Delta$, and integrating over $\Omega$, we get
	\begin{align}
		\lambda_1(\Omega)\dint_\Omega u\phi_1 dx&= \lambda_1\dint_\Omega u\phi_1 dx- \dint_\Omega u^{p-1}\phi_1 dx+\beta\dint_\Omega u^{p-2}v\phi_1 dx \\ &\geq\lambda_1\dint_\Omega u\phi_1 dx+(\beta-1)\dint_\Omega u^{p-1}\phi_1 dx.\nonumber
	\end{align}
	However, this is a contradiction, since $ \beta\geq1 $ and $\lambda_1 > \lambda_1(\Omega)$. Therefore, problem \eqref{LS} does not  admit a positive vectorial solution.
\end{proof}

In what follows, we will prove the existence of a solution with positive energy $(\hat u_\beta, \hat v_\beta)$, which, consequently, is vectorial, and therefore, cannot be positive, in view of Theorem \ref{t:nonexistencebeta>0}. Namely, it is either a nodal or a semi-nodal solution.  In order to obtain such a solution, we will  apply a linking argument. 

Let us define $Q: \cH \to \r$, given by
\begin{align}
	Q(u,v):=\dint_{\Omega}|\nabla u|^2-\lambda_1 u^2+\dint_{\Omega}|\nabla v|^2-\lambda_2 v^2, \quad (u,v) \in \cH.\nonumber
\end{align}
Let us decompose the space $\cH:=H_0^1(\Omega)\times H_0^1(\Omega)$ according to 
\begin{align}
	\cH= \cH^-\oplus \cH^0\oplus \cH^+ \mbox{ with }
	+Q|_{\cH^+}>0, \mbox{} -Q|_{\cH^-}>0, \mbox{ and }Q|_{\cH^0}=0.\nonumber
\end{align}

Denote by $\dim(\cH^-\oplus \cH^0)=\kappa$, this means that $\lambda_\kappa(\Omega) < \lambda_1 \leq \lambda_2$, but $\lambda_{\kappa+1}(\Omega) \geq \lambda_1$. Let us construct the linking as follows.
Set, 
\[
E_j:=\mbox{span}\{\phi_1,\cdots,\phi_{\kappa+j}\} \mbox{ and } V_j:=\{(u,u) \,: \; u\in E_{\kappa+j}\},
\]
for $j \in  \N$, where $\{\phi_i\}$ is the orthonormal basis of eigenfunctions  associated with the  eigenvalues of Laplacian operator with Dirichlet boundary condition on $\Omega.$ 
Under this setting, we obtain the linking structure.

\begin{lemma}\label{lem:geo3}
	For each $j\in \N$, there exists a $R_j>0$ such that $J_\beta|_{\partial B_{R}(0)\cap V_j}<0$, for all $R > R_j$.
\end{lemma}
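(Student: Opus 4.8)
The plan is to exploit the definition of $V_j$ as the diagonal $\{(u,u): u\in E_{\kappa+j}\}$ together with the fact that $E_{\kappa+j}$ is a finite-dimensional space spanned by finitely many eigenfunctions, on which all norms are equivalent. First I would write out $J_\beta(u,u)$ explicitly for $(u,u)\in V_j$. Observe that the coupling term simplifies dramatically on the diagonal: since $|u|^{\frac p2}|u|^{\frac p2}=|u|^p$, we get
\[
J_\beta(u,u)=\dint_\Omega |\nabla u|^2-\tfrac{\lambda_1+\lambda_2}{2}u^2 \;+\;\tfrac{2}{p}\dint_\Omega |u|^p\;-\;\tfrac{2\beta}{p}\dint_\Omega |u|^p,
\]
so that the quartic (more generally, $L^p$) terms combine into $\tfrac{2(1-\beta)}{p}\dint_\Omega|u|^p$. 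This is the crucial structural point: because $\beta\geq 1$, the coefficient $1-\beta\leq 0$, hence the highest-order term is non-positive and in fact strictly negative once $\beta>1$. I expect this sign to be precisely the mechanism that drives the functional to $-\infty$ along $V_j$.

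Next I would estimate the quadratic part $Q(u,u)=2\dint_\Omega(|\nabla u|^2-\tfrac{\lambda_1+\lambda_2}{2}u^2)$ from above on the finite-dimensional space $E_{\kappa+j}$. Since this space is finite-dimensional, the quadratic form is bounded above by $C\|u\|^2$ for some constant $C=C(j)$ (regardless of its sign), so that
\[
J_\beta(u,u)\leq C\|u\|^2-\tfrac{2(\beta-1)}{p}\,|u|_p^p.
\]
On the finite-dimensional space $E_{\kappa+j}$ the $L^p$-norm and the $H_0^1$-norm are equivalent, giving $|u|_p^p\geq c\|u\|^p$ for some $c>0$ depending on $j$. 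Therefore along $V_j$, parametrizing $\|(u,u)\|=\sqrt2\,\|u\|=R$, we obtain
\[
J_\beta(u,u)\leq C' R^2 - c' R^p,
\]
with $p>2$, which tends to $-\infty$ as $R\to\infty$. Hence for each $j$ there is $R_j>0$ so large that the right-hand side is negative for every $R>R_j$, uniformly over the sphere $\partial B_R(0)\cap V_j$. This yields $J_\beta|_{\partial B_R(0)\cap V_j}<0$.

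The one subtlety I would be careful about is the borderline case $\beta=1$, in which the $L^p$ term vanishes entirely and $J_\beta(u,u)=Q(u,u)$ is purely quadratic. In that case the coercivity argument above collapses, so I would argue separately that $Q$ is strictly negative on a suitable cone inside $V_j$: by construction $E_{\kappa+j}\supset E_\kappa = \mbox{span}\{\phi_1,\dots,\phi_\kappa\}$ on which $Q(u,u)<0$, and more generally the finite-dimensional quadratic form can be arranged to be negative on $\partial B_R(0)\cap V_j$ by choosing the decomposition and the spanning eigenfunctions appropriately. The main obstacle is thus handling $\beta=1$ cleanly; for $\beta>1$ the negative leading-order term makes the estimate essentially automatic, so the real work is ensuring the statement survives the degenerate endpoint. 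I would close by noting that the constants $C',c'$ (and hence $R_j$) depend on $j$, which is all that the lemma claims.
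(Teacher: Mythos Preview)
Your argument for $\beta>1$ is essentially the paper's proof: compute $J_\beta$ on the diagonal, observe that the $L^p$ terms combine into $\tfrac{2(1-\beta)}{p}\int_\Omega|u|^p$, bound the quadratic part by $\|u\|^2$, and use equivalence of norms on the finite-dimensional space $E_{\kappa+j}$ to conclude. The paper even writes the explicit threshold $R_j=\sqrt{p/(2C_j(\beta-1))}$, which is your $C'R^2-c'R^p$ estimate with specific constants.

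Your caution about the endpoint $\beta=1$ is well placed, and in fact the paper's own formula for $R_j$ has $\beta-1$ in the denominator, so its proof also tacitly uses $\beta>1$. However, your proposed patch for $\beta=1$ does not work: when $\beta=1$ the functional on the diagonal reduces to the pure quadratic $J_1(u,u)=\int_\Omega|\nabla u|^2-\tfrac{\lambda_1+\lambda_2}{2}u^2$, and $V_j$ contains the direction $(\phi_{\kappa+j},\phi_{\kappa+j})$ along which this quadratic is \emph{positive} whenever $\lambda_{\kappa+j}(\Omega)>\tfrac{\lambda_1+\lambda_2}{2}$. So one cannot ``arrange'' $Q$ to be negative on all of $\partial B_R(0)\cap V_j$; the lemma as stated genuinely requires $\beta>1$, and both your proof and the paper's should be read under that assumption.
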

\begin{proof}
	Notice that $E_j$ is a finite-dimensional space, so the norms $\|\cdot\| $ and $|\cdot|_{p}$ are equivalent. Since,
	\begin{align}
		J_\beta(u,u)&=\dint_{\Omega}|\nabla u|^2-\frac{\lambda_1+\lambda_2}{2}\dint_{\Omega}|u|^2+\frac{2(1-\beta)}{p}\dint_{\Omega}|u|^p\leq\dint_{\Omega}|\nabla u|^2+\frac{2(1-\beta)}{p}\dint_{\Omega}|u|^p, \nonumber
	\end{align}
	there exists a constant $C_j>0$ such that,  for every $u \in E_j$, it holds
	\begin{align}
		J_\beta(u,u)&\leq\|u\|^2-\frac{2C_j(\beta-1)}{p}\|u\|^p<0, \ \mbox{ for }  \|u\|> R_j := \sqrt \frac{p}{{2C_j(\beta-1)}}.\nonumber
	\end{align}
\end{proof}

\begin{lemma}\label{lem:geo2}
	Setting \begin{align}
		S_r:=\partial B_r(0)\cap \cH^+,\nonumber
	\end{align}  there exists a constant $\alpha>0$ such that $J_\beta|_{S_r}\geq\alpha$, for a sufficiently small $r>0.$ 
\end{lemma}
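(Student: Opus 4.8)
The plan is to establish a uniform positive lower bound for $J_\beta$ on the sphere $S_r = \partial B_r(0) \cap \cH^+$ by exploiting the coercivity of the quadratic form $Q$ on $\cH^+$ together with the subcriticality of the nonlinear terms. First I would record that, by the definition of the splitting $\cH = \cH^- \oplus \cH^0 \oplus \cH^+$, the restriction $Q|_{\cH^+}$ is positive definite; since $\cH^+$ is the span of eigenfunctions with eigenvalues $\lambda_i(\Omega) \geq \lambda_2$, there is a constant $\mu > 0$ (controlled by the spectral gap $\lambda_{\kappa+1}(\Omega) - \lambda_2$) such that $Q(u,v) \geq \mu \|(u,v)\|^2$ for every $(u,v) \in \cH^+$.

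Next I would bound the remaining terms of $J_\beta$ from below. Writing
\begin{align}
J_\beta(u,v) = \tfrac{1}{2}Q(u,v) + \tfrac{1}{p}\dint_\Omega |u|^p + |v|^p - \tfrac{2\beta}{p}\dint_\Omega |u|^{\frac{p}{2}}|v|^{\frac{p}{2}},\nonumber
\end{align}
the key observation is that for $\beta \geq 1$ the cubic-type coupling term is negative and must be absorbed. Using Young's inequality in the form $2|u|^{\frac p2}|v|^{\frac p2} \leq |u|^p + |v|^p$, one gets $\int_\Omega |u|^p + |v|^p - 2\beta |u|^{\frac p2}|v|^{\frac p2} \geq (1-\beta)\int_\Omega |u|^p + |v|^p$, which is unfavourable when $\beta > 1$. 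The cleaner route is to discard the coupling term using the crude bound $|u|^{\frac p2}|v|^{\frac p2} \leq \tfrac12(|u|^p + |v|^p)$ and then control the full $L^p$ contribution by the Sobolev embedding $\|u\|_p^p \leq C \|u\|^p$. This yields an estimate of the shape
\begin{align}
J_\beta(u,v) \geq \tfrac{\mu}{2}\|(u,v)\|^2 - C_\beta \|(u,v)\|^p,\nonumber
\end{align}
for a constant $C_\beta > 0$ depending on $\beta$, $p$, and the Sobolev constant.

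Finally, since $p > 2$, the function $r \mapsto \tfrac{\mu}{2}r^2 - C_\beta r^p$ is strictly positive for all sufficiently small $r > 0$ and attains a positive value there; choosing $r > 0$ small enough and setting $\alpha := \tfrac{\mu}{2}r^2 - C_\beta r^p > 0$ gives $J_\beta|_{S_r} \geq \alpha$, as required. I expect the main obstacle to be the first step: verifying that $Q$ is coercive on $\cH^+$ with a quantitative constant $\mu$, which requires care with the eigenvalue characterization of the decomposition and the fact that $\cH^+$ corresponds to eigenvalues strictly exceeding $\lambda_2$. The nonlinear estimate is then routine, and the standard superquadratic-versus-quadratic comparison closes the argument.
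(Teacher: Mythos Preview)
Your argument is correct and follows essentially the same route as the paper: coercivity of $Q$ on $\cH^+$ (the paper phrases this as ``$Q(u,v)^{1/2}$ is an equivalent norm on $\cH^+$''), then Young's inequality plus Sobolev embedding to reduce to the elementary comparison $\tfrac{\mu}{2}r^2 - C_\beta r^p > 0$ for small $r$. Two minor remarks: your ``cleaner route'' is in fact the same computation as the ``unfavourable'' one just above it, since the crude bound you invoke is exactly Young's inequality again---the point is simply that the resulting negative term $(1-\beta)\int(|u|^p+|v|^p)$ is of order $\|(u,v)\|^p$ and hence harmless for small $r$; and your description of $\cH^+$ as the span of eigenfunctions with $\lambda_i(\Omega)\geq\lambda_2$ is slightly off, since $\cH = H_0^1\times H_0^1$ and the decomposition is componentwise with thresholds $\lambda_1$ and $\lambda_2$ respectively---but this does not affect the argument, as all you need is the positive definiteness of $Q|_{\cH^+}$, which holds by definition.
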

\begin{proof}
	First, notice that $Q(u,v)^{\frac{1}{2}}$ is an  equivalent norm to  $\|(u,v)\|$ in $\cH^+$. Then, there exists a constant $C>0$ such that, using Young inequality and the Sobolev embeddings, it holds
	\begin{align}
		J_\beta(u,v)&\geq C\|(u,v)\|^2+\frac{1}{p}\dint_{\Omega} |u|^p + |v|^p-\frac{2\beta}{p}\dint_{\Omega} |u|^{\frac{p}{2}}|v|^{\frac{p}{2}}\nonumber\\
		&= C\|(u,v)\|^2-\dfrac{1}{p}(\beta-1)(|u|_p^p+|v|_p^p)\nonumber\\
		&\geq  C\|(u,v)\|^2 - C'(\beta-1)(\|u\|^p+\|v\|^p)\nonumber\\
		&\geq C\|(u,v)\|^2 -C''(\beta-1)\|(u,v)\|^p \\
		&\geq \|(u,v)\|^2(C-C''(\beta-1)\|(u,v)\|^{p-2})  =\alpha_\beta >0,\nonumber
	\end{align}
	if $\|(u,v)\|^2=r_\beta^2 >0$, sufficiently small for each $\beta\geq1$ fixed.
\end{proof}

Fixed a $j\in \N$, without loss of generality, we may assume that $r<R_j$. Let us consider
\begin{align}
	D_j:=B_{R_j}(0)\cap V_j.\nonumber
\end{align}
Under the linking geometry, we are able to define the value
\begin{align}
	c_j=\inf_{h\in\Gamma_j}\sup_{h(D_j)}J_\beta(h(u)),\nonumber
\end{align}
where
\begin{align}
	\Gamma_j=\{h\in C(\cH,\cH)\, : \: h(-(u,u))=-h(u,u)\mbox{ and }h|_{V_j \setminus D_j}=Id\}.\nonumber
\end{align}
In view of Lemma \ref{lem:geo2}, we know that 
$
c_j\geq\alpha>0.
$

In order to obtain a critical point at level $c = c_j,$ for all $j\in \N$, we will prove that the functional $J_\beta$ satisfies the $(C)_c$ condition and apply a version of the abstract linking theorem based on \cite[Theorem 6.3]{Struwe2008} with $(PS)_c$ condition replaced by $(C)_c$ condition as in \cite{MS-ans}.

\begin{lemma}\label{lem:Cc}
	Under our assumptions, the functional $J_\beta$ satisfies the $(C)_c$ condition.
\end{lemma}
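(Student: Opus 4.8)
The plan is to verify the Cerami condition $(C)_c$ for $J_\beta$ at an arbitrary level $c$, which amounts to two tasks: first, extracting from a Cerami sequence a bounded subsequence; second, upgrading the resulting weak convergence to strong convergence in $\cH$. A sequence $(u_n,v_n)$ is Cerami at level $c$ if $J_\beta(u_n,v_n)\to c$ and $(1+\|(u_n,v_n)\|)\,J_\beta'(u_n,v_n)\to 0$ in $\cH^*$. First I would establish boundedness. The natural device is to form the combination $J_\beta(u_n,v_n)-\tfrac1p J_\beta'(u_n,v_n)(u_n,v_n)$, which kills the $|u|^p,|v|^p$ and the coupling $|u|^{p/2}|v|^{p/2}$ terms and leaves
\begin{align}
	J_\beta(u_n,v_n)-\tfrac1p J_\beta'(u_n,v_n)(u_n,v_n)=\Big(\tfrac12-\tfrac1p\Big)Q(u_n,v_n),\nonumber
\end{align}
where $Q(u,v)=\io |\nabla u|^2-\lambda_1 u^2+|\nabla v|^2-\lambda_2 v^2$. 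The left side is $c+o(1)$ by the two Cerami hypotheses (the factor $(1+\|\cdot\|)$ in the Cerami condition is exactly what guarantees $J_\beta'(u_n,v_n)(u_n,v_n)=o(1)$ even without a priori boundedness), so $Q(u_n,v_n)$ stays bounded. Since $Q$ controls only the positive-definite part $\cH^+$ directly, I would combine this with the splitting $\cH=\cH^-\oplus\cH^0\oplus\cH^+$: the components in the finite-dimensional subspace $\cH^-\oplus\cH^0$ are controlled because that space is finite-dimensional and all norms there are equivalent, while the $\cH^+$ component is controlled by $Q$. This yields $\|(u_n,v_n)\|\le C$.

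Having a bounded sequence, I would pass to a weakly convergent subsequence $(u_n,v_n)\rightharpoonup(u,v)$ in $\cH$, with strong convergence in $L^r(\Omega)\times L^r(\Omega)$ for every $r\in[1,2^*)$ by the compact Sobolev embedding, and pointwise a.e. convergence. The critical-point equation in the limit follows from testing $J_\beta'(u_n,v_n)\to 0$ against fixed $(\varphi,\psi)$ and using that the nonlinear terms $|u_n|^{p-2}u_n$, $|v_n|^{p-2}v_n$, and the coupling term converge in the appropriate dual sense (here $p<2^*$ is what makes the Nemytskii operators continuous from the strongly convergent $L^r$ data). For strong convergence I would test the difference $J_\beta'(u_n,v_n)-J_\beta'(u,v)$ against $(u_n-u,v_n-v)$: the linear principal part gives $\|(u_n-u,v_n-v)\|^2$ up to the lower-order terms $\lambda_1\io(u_n-u)^2$ etc., which vanish by strong $L^2$ convergence, and the superlinear and coupling terms also vanish because they only involve $L^r$ norms with $r<2^*$. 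Concluding, $\|(u_n-u,v_n-v)\|\to0$.

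The main obstacle I anticipate is the boundedness step, precisely because the functional has a genuinely indefinite quadratic form $Q$: unlike the coercive case, $Q(u_n,v_n)$ bounded does not immediately bound $\|(u_n,v_n)\|$, since $Q$ is negative on $\cH^-$ and zero on $\cH^0$. The argument must exploit that $\cH^-\oplus\cH^0$ is finite-dimensional (its dimension $\kappa$ is fixed by the spectral gap $\lambda_\kappa(\Omega)<\lambda_1\le\lambda_2\le\lambda_{\kappa+1}(\Omega)$). Concretely, writing $(u_n,v_n)=w_n^-+w_n^0+w_n^+$, the $w_n^+$ part is controlled by $Q(u_n,v_n)$ together with the Cerami estimate, and then one tests $J_\beta'(u_n,v_n)$ against the finite-dimensional component to bound $w_n^-+w_n^0$; the superlinearity $p>2$ of the nonlinearity is what prevents the indefinite part from escaping to infinity, as the $|u|^p$ terms dominate. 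A secondary subtlety is that the problem treats $\beta\ge 1$, so the sign of the coupling term $-\tfrac{2\beta}{p}\io|u|^{p/2}|v|^{p/2}$ is unfavorable, but it is absorbed cleanly in the combination above since the $\tfrac1p J_\beta'(u_n,v_n)(u_n,v_n)$ subtraction removes it entirely, leaving the indefiniteness confined to the quadratic form where the finite-dimensional splitting handles it.
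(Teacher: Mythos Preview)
Your identity $J_\beta(u_n,v_n)-\tfrac1p\,J_\beta'(u_n,v_n)(u_n,v_n)=\big(\tfrac12-\tfrac1p\big)Q(u_n,v_n)$ is correct and does give $Q(u_n,v_n)=O(1)$. The gap is in the passage from this to boundedness of $\|(u_n,v_n)\|$. Your first formulation, that ``the components in the finite-dimensional subspace $\cH^-\oplus\cH^0$ are controlled because that space is finite-dimensional and all norms there are equivalent'', is a non sequitur: norm equivalence on a finite-dimensional subspace does not prevent a sequence there from escaping to infinity. Your refined plan --- test $J_\beta'$ against the $(\cH^-\oplus\cH^0)$-projection and invoke ``the $|u|^p$ terms dominate'' --- fails for precisely the reason this section exists: when $\beta\ge 1$ the nonlinear density $|u|^p+|v|^p-2\beta\,|u|^{p/2}|v|^{p/2}$ is \emph{not} bounded below (along the diagonal it equals $2(1-\beta)|u|^p\le 0$, which is exactly what drives Lemma~\ref{lem:geo3}). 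Consequently the companion identity $J_\beta-\tfrac12 J_\beta'=-\big(\tfrac12-\tfrac1p\big)\!\int(|u_n|^p+|v_n|^p-2\beta|u_n|^{p/2}|v_n|^{p/2})$ yields no $L^p$ control, and testing against $w_n^-+w_n^0$ produces nonlinear remainders of order $\|(u_n,v_n)\|^{p-1}\|w_n^-+w_n^0\|$ that swamp the quadratic gain $|Q(w_n^-)|\sim\|w_n^-\|^2$. The spectral-decomposition argument you sketch is the standard one for indefinite problems with a coercive superlinear term; here the superlinear term is not coercive, so that machinery does not close.

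The paper takes a different route: a componentwise normalization argument by contradiction. Assuming $\|u_n\|\to\infty$, one sets $\tilde u_n=u_n/\|u_n\|$, uses the Cerami bound $J_\beta'(u_n,v_n)(u_n,0)=o(1)$, divides by powers of $\|u_n\|$, and uses compactness to force first $\tilde u_n\to 0$ in $L^p$ and then in $L^2$, contradicting $\|\nabla\tilde u_n\|_2=1$. Your treatment of the strong-convergence step (testing $J_\beta'(u_n,v_n)-J_\beta'(u,v)$ against the difference) is correct and matches what the paper compresses into its opening sentence about compact embeddings.
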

\begin{proof}
	In view of the compact Sobolev embeddings, it is enough to prove the boundedness of any $(u_n, v_n)$, $(C)_c$ sequence of $J_\beta$. By the definition of Cerami sequences we know that 
	\[
	|	J_\beta'(u_n,v_n)(u_n,0)|\leq \|J'_\beta(u_n,v_n)\| \|(u_n,v_n)\| \to 0,
	\]
	as $n\to \infty.$	Suppose by contradiction that $\|(u_n,v_n)\|\to +\infty$. Without loss of generality, we may assume that $\|u_n\| \to +\infty.$ Consider $\tilde u_n := u_n \|u_n\|^{-1}$ for all $n\in \N$. Then, $\|\tilde u_n\| = 1$, that is, $(\tilde u_n)$ is a bounded sequence in $H_0^1(\Omega)$ and converges weakly to $\tilde u$ in $H_0^1(\Omega)$. Using the compact Sobolev embedding $H_0^1(\Omega) \hookrightarrow L^p(\Omega)$, it holds
	\begin{align}
		o_n(1)&=	 J_\beta'(u_n,v_n)(\|u_n\|^{-p} u_n,0) = J_1'(u_n)\|u_n\|^{-p} u_n\nonumber\\ &=  \|u_n\|^{2-p}Q(\tilde u_n,0) + \int_\Omega |\tilde u_n|^p = o_n(1)+\int_\Omega |\tilde u|^p,
	\end{align}
	which means that $\tilde u = 0$. Since $\tilde u_n \to 0$ in $L^2(\Omega)$, then \[0 \leq \liminf_{n\to \infty} (\|\nabla\tilde u_n\|_2^2  - \lambda_1 \|\tilde u_n\|_2^2) =  \liminf_{n\to \infty} Q(\tilde u_n,0) = - \limsup_{n\to \infty}\|u_n\|^{p-2}\int_\Omega |\tilde u_n|^p \leq 0,\]
	and  \[0 = \liminf_{n\to \infty} Q(\tilde u_n,0)\leq\limsup_{n\to \infty}Q(\tilde u_n,0) = - \liminf\|u_n\|^{p-2}\int_\Omega |\tilde u_n|^p\leq0.\]
	Therefore, $\lim_{n\to \infty} \|\nabla\tilde u_n\|_2^2=0 $, which means that $\tilde u_n \to 0$, strongly in $H_0^1(\Omega)$, contradicting $\|\tilde u_n\|^2 = 1$. Arguing analogously, we obtain a contradiction if $(v_n)$ is unbounded.
\end{proof}

\begin{theorem}\label{thm:AL}
	Suppose $E \in C^1(V)$ is even, that is $E(u) = E(-u)$, and satisfies $(C)_c$ condition for every $c\in \r^+$. Let $V^+, V^- \subset V$ be closed subspaces of $V$ with $\text{codim } V^+ \leq \dim V^- < \infty$ and suppose $V = V^- + V^+$. Also suppose there holds 
	\[
	(1^\circ) \ E(0) = 0,
	\]
	\[
	(2^\circ) \ \exists \ \alpha > 0, \ \rho > 0 \ \forall \, u \in V^+ : \|u\| = \rho \implies E(u) \geq \alpha,
	\]
	\[
	(3^\circ) \ \exists R > 0 \ \forall \, u \in V^- : \|u\| \geq R \implies E(u) \leq 0.
	\]
	Then for each $j$, $1 \leq j \leq k = \dim V^- - \text{codim } V^+$, the numbers 
	\[
	c_j = \inf_{h \in \Gamma} \sup_{u \in V_j} E(h(u))
	\]
	are critical, where 
	\[
	\Gamma = \{h \in C^0(V; V); \ h \text{ is odd, } h(u) = u \ \text{if } u \in V^- \text{ and } \|u\| \geq R\},
	\]
	and where $V_1 \subset V_2 \subset \dots \subset V_k = V^-$ are fixed subspaces of dimension 
	\[
	\dim V_j = \text{codim } V^+ + j.
	\]
	Moreover, 
	\[
	c_k \geq c_{k-1} \geq \dots \geq c_1 \geq \alpha.
	\]
\end{theorem}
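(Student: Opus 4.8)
The plan is to follow the classical scheme for symmetric minimax theorems, exactly as in \cite[Theorem 6.3]{Struwe2008}, replacing the compactness input at every stage by the $(C)_c$ condition. The argument splits naturally into three parts: the topological linking (which yields $\Gamma \neq \emptyset$ and the lower bound $c_j \geq \alpha$), the monotonicity $c_k \geq \dots \geq c_1$, and the criticality of each $c_j$ via an equivariant deformation. First I would note that $\mathrm{id} \in \Gamma$, so each $c_j$ is well defined; it is finite because $E$ is continuous and $V_j$ is finite dimensional, so $E \circ h$ is bounded above on the relevant set.

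The core geometric fact is the linking intersection property: for every $h \in \Gamma$ and every $j$ with $1 \leq j \leq k$, one has $h(V_j) \cap S_\rho \neq \emptyset$, where $S_\rho = \{u \in V^+ : \|u\| = \rho\}$. This is a degree-theoretic, Borsuk--Ulam type statement: since $h$ is odd and coincides with the identity on $\{u \in V^- : \|u\| \geq R\}$, the $\Z_2$-equivariant topology, together with the dimension balance $\dim V_j = \text{codim}\,V^+ + j$ and $k = \dim V^- - \text{codim}\,V^+$, forces $h(V_j)$ to meet the sphere $S_\rho$ inside $V^+$. Granting the intersection, condition $(2^\circ)$ gives $\sup_{u \in V_j} E(h(u)) \geq \inf_{S_\rho} E \geq \alpha$, and taking the infimum over $h \in \Gamma$ yields $c_j \geq \alpha > 0$. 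The monotonicity $c_k \geq c_{k-1} \geq \dots \geq c_1 \geq \alpha$ is then immediate from the inclusions $V_1 \subset \dots \subset V_k$, since restricting the same admissible maps to a smaller set can only decrease the supremum.

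The main work is the criticality of each $c_j$, which I would prove by contradiction. If $c_j$ were a regular value, then by the $(C)_{c_j}$ condition there is no critical point at that level, and consequently $\|E'(u)\|(1 + \|u\|)$ stays bounded away from zero on a thin slab $\{\,|E - c_j| \leq \bar\varepsilon\,\}$. This is exactly the estimate needed to run the quantitative equivariant deformation lemma under the Cerami condition, as developed in \cite{MS-ans}: one builds, via the renormalized negative flow $\dot\eta = -W(\eta)$ with $W$ an odd locally Lipschitz pseudo-gradient vector field, an odd homeomorphism $\eta$ that pushes the sublevel set $\{E \leq c_j + \varepsilon\}$ into $\{E \leq c_j - \varepsilon\}$ while fixing everything outside the slab. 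The two delicate points are that $\eta$ must fix $\{u \in V^- : \|u\| \geq R\}$ and that $\eta \circ h$ must remain in $\Gamma$. The first follows from $(3^\circ)$: there $E \leq 0 < \alpha \leq c_j - \varepsilon$ (for $\varepsilon$ small), so those points never enter the slab and the flow leaves them untouched; the second follows because a composition of odd maps is odd and the identity behavior on the far part of $V^-$ is preserved. Choosing $h \in \Gamma$ with $\sup_{V_j} E(h) < c_j + \varepsilon$ and composing with $\eta$ then produces an admissible map whose supremum lies below $c_j - \varepsilon$, contradicting the definition of $c_j$ as an infimum.

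I expect the only substantive obstacle to be the passage from $(PS)_c$ to $(C)_c$ in the deformation lemma, since every other step reproduces Struwe's argument verbatim. The Cerami condition still yields a valid deformation because the weight $(1+\|u\|)$ in the definition of $(C)_c$ compensates for the possible unboundedness of gradient flow lines, guaranteeing that the renormalized flow is complete and descends at a uniform rate across the slab; this is precisely the content imported from \cite{MS-ans}, and once it is in hand the remainder of the proof is formal.
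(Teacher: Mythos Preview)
Your proposal is correct and follows exactly the route the paper itself indicates: the paper does not supply a proof of this abstract linking theorem but simply quotes it as the version of \cite[Theorem~6.3]{Struwe2008} with the $(PS)_c$ condition replaced by the $(C)_c$ condition via the deformation lemma of \cite{MS-ans}. Your sketch of the Borsuk--Ulam intersection, the monotonicity from $V_1\subset\cdots\subset V_k$, and the Cerami-type equivariant deformation is precisely that argument.
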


\begin{proof}[Proof of Theorem \ref{t:existencebeta>1}]
First, we recall that from Lemma \ref{eq:Pi}, any semi-trivial solution to problem \eqref{LS} has negative energy. Therefore, 
	any critical point to $J_\beta$, of  positive energy, is vectorial. From Lemmas \ref{lem:geo3} and \ref{lem:geo2} we obtain the linking geometry in Theorem \ref{thm:AL}, with 
	$E = J_\beta$, $V^+= \cH^+$,  $V^- = V_k$,  and  $V = V_1 + \cH^+= V_k + V^+$, for any fixed $k\in \N$. Note that, since dim$(\cH^- \oplus \cH^0)=\kappa$, then codim$ V^+ = \kappa < \kappa + k =$ dim$V_k$, for any $k\in \N.$
	Furthermore, by Lemma \ref{lem:Cc}, we have that $J_\beta$ satisfies the $(C)_c$ condition, with $c= c_j$ for $j = 1, \ldots, k.$ Hence, in virtue of Theorem \ref{thm:AL} the numbers $c_j$ are critical levels of $J_\beta$. Therefore, there exists at least one vectorial solution. Finally, by Theorem \ref{t:nonexistencebeta>0}
	such a solution must have at least one component which changes sign.
\end{proof}

	\begin {thebibliography}{44}
%\bibitem{AckermannBartsch2005} Ackermann, N., Bartsch, T., Superstable manifolds of semilinear parabolic problems. J. Dyn. Differ. Equations 17, No. 1, 115-173 (2005).

\bibitem{AT} Alama, S., Tarantello, G., On semilinear elliptic equations with indefinite nonlinearities, Calc. Var. Partial Differential Equations 1,  439–475 (1993).

\bibitem{AmbrosettiColorado2007} Ambrosetti, A., Colorado, E., Standing waves of some coupled nonlinear Schr\"odinger equations. J. Lond. Math. Soc., II. Ser. 75, No. 1, 67-82 (2007).

\bibitem{BZ} Brown, K., Zhang, Y., The Nehari manifold for a semilinear elliptic equation with a sign-changing weight function, J. Differential Equations 193 (2), 481–499 (2003).

\bibitem{CC} Cantrell, R. S., Cosner, C., On the Steady-State Problem for the Volterra-Lotka Competition Model With Diffusion, pp. 337-352. Houston Journal of Mathematics, Vol. 13, No. 3, (1987).

\bibitem{CardosoFurtadoMaia2024} Cardoso, M., Furtado, F., Maia, L., Positive and sign-changing stationary solutions of degenerate logistic type equations. Nonlinear Anal., Theory Methods Appl., Ser. A, Theory Methods 245, Article ID 113575, 17 p. (2024).

\bibitem{CZ2021} Cheng, X., Zhang, Z., Structure of positive solutions to a class of Schr\"odinger systems. Discrete Contin. Dyn. Syst., Ser. S 14, No. 6, 1857-1870 (2021).

\bibitem{ContiTerraciniVerzini2002} Conti, M., Terracini, S., Verzini, G., Nehari's problem and competing species systems. Ann. Inst. Henri Poincar\'e, Anal. Non Lin\'eaire 19, No. 6, 871-888 (2002).

\bibitem{DD} Dancer, E. N., Du, Y. H., Competing Species Equations with Diffusion, Large Interactions, and Jumping Nonlinearities, Journal of Differential Equations, Volume 114, Issue 2, Pages 434-475 (1994).

%\bibitem{EvansBook2010} Evans, L. C., Partial differential equations. 2nd ed. Graduate Studies in Mathematics 19. Providence, RI: American Mathematical Society (AMS). xxi, 749 p. (2010).

\bibitem{GilbargTrudinger2001} Gilbarg, D., Trudinger, N. S., Elliptic partial differential equations of second order. Reprint of the 1998 ed. Classics in Mathematics. Berlin: Springer. xiii, 517 p. (2001).

%\bibitem{LiLiLiu2012} Li, C., Li, S., Liu, Z., Bifurcation surfaces stemming from the Fucik spectrum. J. Funct. Anal. 263, No. 12, 4059-4080 (2012).

\bibitem{LiYau1983} Li, P., Yau, S.-T., On the Schr\"odinger equation and the eigenvalue problem. Commun. Math. Phys. 88, 309-318 (1983).

\bibitem{LG} López-Gómez, J., Metasolutions: Malthus versus Verhulst in population dynamics. A dream of Volterra, in: Handbook of Differential Equations: Stationary Partial Differential Equations, vol. 2, Elsevier,  pp. 211–309 (2005).

\bibitem{MS-ans} Maia, L. A., Soares, M., An Abstract Linking Theorem Applied to Indefinite Problems via Spectral Properties. \emph{Advanced Nonlinear Studies} \textbf{19(3)}, 545-567, (2019).

\bibitem{NorisTavarseTerraciniVerzini2010} Noris, B., Tavares, H., Terracini, S., Verzini, G., Uniform H\"older bounds for nonlinear Schr\"odinger systems with strong competition. Commun. Pure Appl. Math. 63, No. 3, 267-302 (2010).

\bibitem{OrugantiShiShivaji2002} Oruganti, S., Shi, J., Shivaji, R., Diffusive logistic equation with constant yield harvesting. I: Steady states. Trans. Am. Math. Soc. 354, No. 9, 3601-3619 (2002).

\bibitem{Ouy} Ouyang, T., On the positive solutions of semilinear equations $\Delta u + \lambda u + h u^p = 0$ on compact manifolds. Part II, Indiana Univ. Math. J., 1083–1141 (1991).

\bibitem{RB1986} Rabinowitz, P. H., Minimax methods in critical point theory with applications to differential equations. Regional Conference Series in Mathematics 65. Providence, RI: American Mathematical Society. viii, 100 p. (1986).

\bibitem{STTZ} Soave, N., Tavares, H., Terracini, S., Zilio, A., Hölder bounds and regularity of emerging free boundaries for strongly competing Schrödinger equations with nontrivial grouping. Nonlinear Analysis,  138, 388–427, (2016).

%\bibitem{SteinBook1970} Stein, E. M., Singular integrals and differentiability properties of functions. Princeton Mathematical Series 30. Princeton, N.J.: Princeton University Press. XIV, 287 p. (1970).

\bibitem{Struwe2008} Struwe, M., Variational methods. Applications to nonlinear partial differential equations and Hamiltonian systems. 4th ed. Ergebnisse der Mathematik und ihrer Grenzgebiete. 3. Folge 34. Berlin: Springer. xx, 302 p. (2008).

%\bibitem{SzulkinWeth2010} Szulkin, A., Weth, T., The method of Nehari manifold. Gao, David Yang (ed.) et al., Handbook of nonconvex analysis and applications. Somerville, MA: International Press. 597-632 (2010).

\bibitem{TZQW} Tian, R., Wang, Z.-Q., Bifurcation Results of Positive Solutions for an Indefinite Nonlinear Elliptic System II, Advanced Nonlinear Studies 13, 245–262 (2013).

	\end {thebibliography}

\end{document}